\newtheorem{theorem}{Theorem}
\newtheorem{proposition}{Proposition}[section]
\newtheorem{lemma}{Lemma}[section]
\theoremstyle{definition}
\newtheorem{definition}{Definition}[section]
\theoremstyle{definition}
\numberwithin{equation}{section}
\DeclareMathOperator{\R}{\mathbb{R}}
\def\nn{\nonumber}
\def\({\left(\begin{array}{cccccc}}
\def\){\end{array}\right)}
\def\com#1{\quad{\textrm{#1}}\quad}
\def\nn{\nonumber}
\def\({\left(\begin{array}{cccccc}}
\def\){\end{array}\right)}
\def\bes{\begin{eqnarray}}
\def\ees{\end{eqnarray}}
\def\bel{\begin{equation}\label}
\newcommand{\beq}{\begin{equation}}
\newcommand{\eeq}{\end{equation}}
\newcommand{\bea}{\begin{eqnarray}}
\newcommand{\eea}{\end{eqnarray}}
\newcommand{\beann}{\begin{eqnarray*}}
\newcommand{\eeann}{\end{eqnarray*}}
\newcommand{\RR}{\mathbb{R}}
\newcommand{\eps}{\varepsilon}
\newcommand{\Sweak}{\mathcal{S}_{\text{weak}}}
\newcommand{\Nu}{\mathcal{V}}
\newcommand{\bp}{\begin{proof}}
\newcommand{\ep}{\end{proof}}
\newcommand{%
    
    \import{./}{.pdf_tex}
}[1]{%
    
    \import{./}{#1.pdf_tex}
}
\newcommand{\norm}[1]{\left\lVert#1\right\rVert}
\title[Quantitative stability]{Quantitative weak-BV stability of ``wild'' solutions to compressible Euler equations, with a view towards higher systems }
\author[Geng Chen]{Geng Chen}
 \address[Geng Chen]{\newline Department of Mathematics, \newline University of Kansas, Lawrence, KS 66045, USA}
 \email{gengchen@ku.edu}
 \author[Cooper Faile]{Cooper Faile}
 \address[Cooper Faile]{\newline Department of Mathematics, \newline The University of Texas at Austin, Austin, TX 78712, USA}
 \email{jcfaile@utexas.edu}
 \author[Sam G. Krupa]{Sam G.  Krupa}
\address[Sam G. Krupa]{\newline Département de mathématiques et applications \newline École normale supérieure, Université PSL, CNRS\newline 45 rue d'Ulm - F 75230 PARIS cedex 05 \newline France}
\email{sam.krupa@ens.fr}
 \date{\today}
\subjclass[2020]{Primary 35L65; Secondary 76N15, 35L45, 35B35.}
 \keywords{compressible Euler system, full Euler, gas dynamics, uniqueness, stability, Hölder, relative entropy, conservation law.}
 \thanks{\textbf{Acknowledgment.} 
 The first author is partially supported by National Science Foundation at grant DMS-2306258, and a SQuaRE at the American Institute of Mathematics. 
The second author is partially supported by National Science Foundation grant DMS-1840314.
The work of the third author is funded by the European Union through the project ``Quantitative Stability and Regularity of Large Data for Conservation Laws.'' Views and opinions expressed are however those of the author(s) only and do not necessarily reflect those of the European Union or European Research Executive Agency (REA). Neither the European Union nor the granting authority can be held responsible for them.}
\begin{document}

\begin{abstract}
For hyperbolic systems of conservation laws, including important physical models from continuum mechanics, the question of stability for large data solutions remains a challenging open problem. In recent work (arXiv:2507.23645) the authors introduce a framework for showing Hölder stability of potentially ``wild'' large data solutions, relative to a class of BV solutions, for systems with two conserved quantities. This is referred to as ``weak-BV'' stability. In this paper, we give a short introduction to the methods while applying them to the ``full'' Euler system with three conserved quantities. We discuss applications to future work for higher systems with additional conserved quantities.
 \end{abstract}

\maketitle

\section{Introduction}
In this paper, we consider the H\"older stability of hyperbolic conservation laws in the space of $L^2$. The general system of hyperbolic conservation laws is in the form of
\beq\label{cl0}
u_t+(f(u))_x=0, \qquad u(x,0)=u^0 ,
\eeq
where $u = (u_1,\dots,u_n) \in \mathcal{V} \subset \R^n$ are the unknowns and $u^{0}\colon\mathbb{R}\to\mathcal{V}$ is the initial data. 
By assumption, the state space $\mathcal{V}$ is an open and bounded convex set.
We assume $f = (f_1,\dots,f_n)$ is $C^4$ on $\mathcal{V}$ up to the boundary.  
Moreover, we assume there exists a strictly convex entropy $\eta \in C^3(\bar{\mathcal{V}})$ and entropy-flux $q\in C^3(\bar{\mathcal{V}})$ verifying
\begin{equation}
    q' = \eta'f'
\end{equation}
in $\mathcal{V}$. We restrict ourselves to entropy-weak solutions, i.e. solutions verifying in a distributional sense
\begin{equation}\label{ineq:entropy}
(\eta(u))_t+(q(u))_x\leq0.
\end{equation}


One fundamental example in the form of \eqref{cl0} is the compressible Euler equation, which is widely used to describe the compressible inviscid flow, such as gas dynamics. Compressible Euler equations are among the oldest partial differential equations (PDEs) written down. It  first appeared in published form in Euler's article in 1757 \cite{euler1757}. In Euler's original work, the system of equations consisted of the momentum and continuity equations. An additional equation, which was called the adiabatic condition, was supplied by Pierre-Simon Laplace in 1816 \cite{Laplace1816} 
(for an extended history see Dafermos \cite[p.~XVII - XXX II]{dafermos_big_book}). 

The 1-d compressible Euler equation, in Lagrangian coordinates, is
\begin{align}
  \tau_t-w_x=0,\nn\\
  w_t+p_x=0,\label{cl}\\
  \left(\frac{1}{2}w^2+\mathcal{E}\right)_t+(w\,p)_x=0, \nn
\end{align}
which can be written in the form of hyperbolic conservation laws \eqref{cl0} with $(u_1,u_2,u_3)=(\tau,w,\frac{1}{2}w^2+\mathcal{E})$. In the literature this system is referred to as ``full Euler'' or ``non-isentropic Euler.''
Here $\tau$ is the specific volume (equivalent to $1/\rho$, for a density $\rho$), $p$ is pressure, $w$ is fluid
velocity, and $\mathcal{E}$ is the specific internal energy.  

The system is closed by an additional equation of state. For convenience, we consider the polytropic ideal $\gamma$-law gas,
with equation of state
\beq\label{eos}
  \mathcal{E}=c_v \theta={\frac{p\,\tau}{\gamma-1}} \com{and} p\,\tau=\bar R\,\theta,\quad\hbox{so}\qquad
  p=\bar Ke^{S/c_v}\tau^{-\gamma}.
\eeq 
Here $S$ is the entropy, $\theta$ is the temperature, $\bar R$, $\bar K$, $c_v$ are
positive constants, and  $\gamma>1$ is the adiabatic gas constant. These state variables satisfy the Gibbs relation 
\beq\label{gib}
\theta\, dS=d\mathcal{E}+p\, d\tau.
\eeq

We use the entropy and entropy-flux pair 
\[\eta=-S=c_v\Big((1-\gamma)\ln\tau-\ln \mathcal{E}+\ln\frac{\bar K}{\gamma-1}\Big), \qquad q=0.\]

Remark \eqref{ineq:entropy} is satisfied since the entropy function $S$ increases after passing any moving $1$-shock or $3$-shock, by the Lax entropy condition (see \cite[p. 341-342]{MR1301779} or \cite{MR2917122}).
The solution of \eqref{cl}, \eqref{ineq:entropy} is equivalent to the one coming from the Eulerian coordinates, even in the class of weak solutions (see \cite{wagnergasdynamics}).

Due to the absence of dissipative effects, classical (i.e. $C^1$) solutions cannot be sustained, and generically, shock waves form in finite time, where the conserved variable becomes discontinuous. Breakdown of classical solutions has been long known. For scalar equations, it goes back to Stokes \cite{Stokes01111848} and Riemann \cite{Riemann}. Shocks are physical phenomenon. For example, the ``sonic boom'' created by supersonic aircraft is characterized by rapid discontinuities in pressure. Once a shock wave forms, one must study weak solutions, and the analysis becomes very difficult, especially when there are many or even infinite shock waves. A natural space to study the solution is the space of \emph{bounded variation} or BV solutions, where the amount of oscillation is controlled. Following the timeline, there are currently three major approaches to studying the global well-posedness of BV solutions in one space dimension: Glimm Scheme, $L^1$ theory, and $L^2$ theory.

The first global existence result was given by \cite{MR0194770}, followed by other methods such as the front tracking scheme in the 1990s \cite{Bbook,MR3443431,dafermos_big_book}, and the vanishing viscosity method (artificial viscosity \cite{MR2150387} and from isentropic Navier-Stokes to Euler \cite{2024arXiv240109305C}). The stability theory is more involved, after the seminal $L^1$ theory in the 1990s \cite{BressanC,LiuYang,BLY,Bbook}, a new approach showing the stability in $L^2$ has been established relatively recently \cite{VASSEUR2008323}. Until the work \cite{2025arXiv250723645C}, these results were limited to giving qualitative stability in $L^2$ (see e.g. \cite{MR4487515,CKV2,Cheng_isothermal}).

The current paper is dedicated to giving a short introduction to a new framework developed for showing \emph{quantitative} Hölder-$1/2$ stability in $L^2$ \cite{2025arXiv250723645C}. Our work is based on the method of relative entropy first introduced by Dafermos \cite{MR546634} and DiPerna \cite{MR523630}, and later extended to $a$-contraction theory by Kang and Vasseur \cite{MR3519973}. Due to the presence of shock waves, Hölder-$1/2$ is the optimal rate of stability result one can expect in $L^2$ (see \cite[Section 1.2]{2025arXiv250723645C}). This work considers systems \eqref{cl0} with two conserved quantities, i.e. $n=2$, also called $2\times 2$ systems. More precisely, we show:
\begin{theorem}[$L^2$-Hölder stability for $2\times 2$ conservation laws, a summary \cite{2025arXiv250723645C}]\label{2x2_theorem}
\hfill

Consider a large class of $2\times 2$ conservation laws, including isentropic Euler and the system of shallow water waves. Fix $T, R>0$. Consider a weak solution $v$ arising as a limit of a sequence of front tracking approximations. Consider also a ``wild'' solution $u$, in a class of solutions analogous to $\Sweak$ (see  \eqref{wild_data}, below). We then have 

\begin{equation}\label{2x2_estimate}
  \|u(\cdot,\tau)-v(\cdot,\tau)\|_{L^2((-R,R))}
  \le K\,\sqrt{\;\|u(\cdot,0)-v(\cdot,0)\|_{L^2((-R-s\tau,\,R+s\tau))}\;}
\end{equation}

for all $\tau\in[0,T]$, for a universal constant $K$, and where $s>0$ is the speed of information (see e.g. \eqref{eq:def-info-speed}, below).

For a general $2\times 2$ system, the solution $v$ can be a limit of small-BV front tracking approximations. For isothermal Euler, the solution $v$ may be a limit of front tracking schemes with large but finite BV-norm.
\end{theorem}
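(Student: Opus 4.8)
The plan is to run the relative-entropy method with a spatially piecewise-constant weight (the $a$-contraction of Kang--Vasseur) coupled to dynamically shifted shock positions, proving the estimate first for the front tracking approximants and then passing to the limit. Write the relative entropy
$$\eta(u\mid v)=\eta(u)-\eta(v)-\eta'(v)\,(u-v),$$
with relative flux $q(u\mid v)=q(u)-q(v)-\eta'(v)\,(f(u)-f(v))$. Strict convexity of $\eta$ together with boundedness of the state space $\mathcal{V}$ gives $c\,|u-v|^2\le \eta(u\mid v)\le C\,|u-v|^2$, so it is equivalent to track a weighted relative entropy $\int a(x,\tau)\,\eta(u\mid v)\,dx$ in place of $\|u-v\|_{L^2}^2$. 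Crucially, this only requires the weak entropy inequality \eqref{ineq:entropy} for the ``wild'' solution $u$, never any regularity of $u$; all structure is extracted from $v$. Since $v$ is a limit of front tracking approximations $v^\nu$, the first reduction is to establish \eqref{2x2_estimate} for each $v^\nu$ — piecewise constant with finitely many fronts — with a constant independent of $\nu$, then pass to the limit using continuity of the two $L^2$ norms under front-tracking convergence.

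For a fixed approximant I would attach to each front $x_i(t)$ of $v^\nu$ a Lipschitz shift $h_i(t)$, replacing $v^\nu$ by the shifted profile $\tilde v^\nu$ whose $i$-th front sits at $x_i(t)+h_i(t)$, and build a weight $a(x,t)$ that is constant between consecutive shifted fronts and jumps across each of them, the jump sizes being dictated by the local front strengths. The central computation is the time derivative
$$\frac{d}{d\tau}\int a(x,\tau)\,\eta\bigl(u(\cdot,\tau)\mid \tilde v^\nu(\cdot,\tau)\bigr)\,dx,$$
which, after using \eqref{ineq:entropy} for $u$ and the Rankine--Hugoniot relations for the fronts of $\tilde v^\nu$, decomposes into (i) a weighted dissipation functional localized at each shifted shock, (ii) terms proportional to the shift velocities $\dot h_i$, (iii) errors from rarefaction and non-physical fronts of the scheme, and (iv) boundary terms from spatial truncation. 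The heart of the $a$-contraction is that for each genuine shock the shift velocity $\dot h_i$ can be chosen — by solving an ODE designed so that the localized sum (i)+(ii) is rendered pointwise non-positive — using the jump conditions on $a$ and the special $2\times2$ structure (Riemann invariants, and for isothermal Euler the constant characteristic speeds permitting large total variation) to guarantee that admissible weights exist.

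The spatial localization to $(-R,R)$, with the enlarged interval $(-R-s\tau,R+s\tau)$ at the initial time, is handled by finite speed of propagation: restricting the weighted relative entropy to the backward domain of dependence of $(-R,R)$ and using the information-speed bound $|q(u\mid v)|\le s\,\eta(u\mid v)$, the boundary terms (iv) acquire a favorable sign and are discarded. The errors (iii) are controlled by the total strength of rarefaction and non-physical fronts, which vanishes along the front-tracking limit by the Glimm interaction estimates. Integrating the resulting differential inequality yields the contraction $\int a\,\eta(u\mid \tilde v^\nu)(\tau)\le \int a\,\eta(u\mid v^\nu)(0)+o_\nu(1)$, i.e.\ Lipschitz $L^2$ stability in the \emph{shifted} frame. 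To return to the unshifted $v^\nu$ I would use $\|u(\tau)-v^\nu(\tau)\|_{L^2}\le \|u(\tau)-\tilde v^\nu(\tau)\|_{L^2}+\|\tilde v^\nu(\tau)-v^\nu(\tau)\|_{L^2}$; the last term is the $L^2$ cost of repositioning each $O(1)$-strength shock by $h_i$, which scales like $\sum_i \sqrt{|h_i(\tau)|}$. Since the dissipation controls $\int_0^\tau|\dot h_i|^2$ by the initial relative entropy, Cauchy--Schwarz gives $|h_i(\tau)|\lesssim \|u(0)-v(0)\|_{L^2}$, and it is exactly this square root of a displacement that produces the Hölder-$1/2$ exponent — optimal because an $L^2$-small but translated shock cannot be matched more accurately.

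The step I expect to be the main obstacle is the simultaneous construction of the weight $a$ and the coupled system of shift ODEs across \emph{all} fronts at once: one must check that admissible weight jumps (those making every localized dissipation non-positive) can be chosen consistently along the entire profile, that the shift system stays well-posed through front collisions in the approximation, and — for the large-BV isothermal case — that these choices persist without smallness of the total variation. Obtaining the uniform-in-$\nu$ bound on $\sum_i \sqrt{|h_i(\tau)|}$ in terms of the initial $L^2$ distance, so that $K$ in \eqref{2x2_estimate} is genuinely universal, is the quantitative crux of the argument.
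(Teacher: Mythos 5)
Your overall architecture --- relative entropy with a piecewise-constant weight, shifted shock positions chosen so the localized dissipation is nonpositive, finite speed of propagation for the spatial truncation, and a limit over front-tracking approximants --- is exactly the one used in the paper (Sections 2--5; \Cref{shift_existence_prop} is your ``shift ODE,'' and the dissipation bound $-Ks_0(\dot h-\lambda)^2$ is the quantity you integrate). The genuine gap is in how you return from the shifted approximation to the unshifted one. You propose a direct $L^2$ triangle inequality, estimating $\|\tilde v^\nu(\tau)-v^\nu(\tau)\|_{L^2}$ as a sum of per-shock repositioning costs of order $\sqrt{|h_i|}$. This does not survive the front-tracking dynamics: once fronts interact, the shifted and unshifted schemes have different interaction times and different outgoing wave configurations, so there is no canonical pairing of ``the same shock displaced by $h_i$''; and for the many weak shocks of a small-BV approximation the displacement bound you can extract from the dissipation, $|h_i|\lesssim s_i^{-1/2}\|u_0-v_0\|_{L^2}$, degenerates as the individual strengths $s_i$ shrink with $\nu$, so the uniformity in $\nu$ that you yourself flag as ``the quantitative crux'' is precisely what is missing. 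The paper closes this gap by measuring the distance between the shifted solution $\psi$ and the unshifted $v_\nu$ in the Bressan--Liu--Yang weighted $L^1$ functional $\Phi$ (\Cref{lem:shifted-ft}): $\Phi$ is a Lyapunov functional that does not increase at interactions and whose growth between interactions is bounded by $K\sum_\alpha|\sigma_\alpha|\,|\dot h_\alpha-\dot h_{\mathrm{true},\alpha}|+\mathcal{O}(\nu)$; Cauchy--Schwarz against the dissipation then gives $\|\psi(\tau)-v_\nu(\tau)\|_{L^1}\le K\sqrt{\epsilon}\,\|u_0-v_0\|_{L^2}$ uniformly in $\nu$.

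A consequence is that the H\"older exponent enters differently than you claim. In the paper the comparison of $u$, $\psi$, $v_\nu$ is assembled in $L^1$ (the relative-entropy term via $L^2\hookrightarrow L^1$ on $(-R,R)$, and the shift term via $\Phi$), yielding a bound that is \emph{Lipschitz} in $\|u_0-v_0\|_{L^2}$; the square root appears only at the last step through the interpolation $\|f\|_{L^2}\le\sqrt{\|f\|_{L^1}\|f\|_{L^\infty}}$, with the $L^\infty$ factor controlled by the bounded state space. Your heuristic that a translated $O(1)$ jump costs $\sqrt{\delta}$ in $L^2$ correctly explains why $1/2$ is optimal, but it is not how the upper bound is obtained, and attempting to make it the proof mechanism is exactly where the argument breaks.
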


This is related to the question of uniqueness of solutions in $L^\infty$ (see Bressan's Open Problem \# 6 \cite[p.~558]{MR4855159}).

 The estimate \eqref{2x2_estimate} has absolutely no dependence on the BV-norm of the weak solution $u$, which may be infinite. This result links the $L^2$ theory with the  $L^1$-Lipschitz quantitative estimates for BV solutions in the $L^1$ theory.

As a consequence of \Cref{2x2_theorem}, for the isothermal system we have uniqueness of certain large $L^\infty$ solutions with initial data which is \emph{everywhere} locally infinite BV, i.e. infinite BV on every open set (see \cite[Corollary 1]{2025arXiv250723645C}). Compare with the other very recent work \cite{2025arXiv250500420B}, which also gives an infinite-BV uniqueness result, for certain small $L^\infty$ solutions for general {$2\times 2$ genuinely nonlinear} systems.

\vspace{.08in}

In this paper, we prove the H\"older stability of the full $3\times 3$ (non-isentropic) Euler system with three equations (see our \Cref{main-theorem}, below). This new result has a direct application to the uniqueness of solutions in a wide space, and also provides a basis for many other future approaches such as the inviscid limit from non-isentropic Navier-Stokes to the Euler equation. 

In the proof of \Cref{2x2_theorem} in the paper \cite{2025arXiv250723645C}, we use the Bressan-Colombo theory \cite{BressanC} as it is instrumental in the analysis of large-BV front tracking schemes (see Colombo-Risebro \cite{ColomboRisebro}). However, the Bressan-Colombo theory is fundamentally limited to the $2\times 2$ systems and furthermore is quite delicate (with e.g. the use of sophisticated homotopy arguments to measure distance in $L^1$). In this paper, we have a view towards higher systems with many conserved quantities. We use the Bressan-Liu-Yang theory \cite{BLY} which is simpler and, moreover, applies to general $n\times n$ systems with $n$ conserved quantities (see \Cref{sec:weight-con}, below). Within this simplified framework, we aim to convey the main ideas and give a concise proof of our \Cref{main-theorem}, quoting some more technical details from other works when necessary.

A key part of proving Hölder stability involves carefully controlling the entropy dissipation at a shock (see \Cref{shift_existence_prop}, below). In related work,  the second author of this present paper has started a new program to study the dissipation at shocks in all wave families and for general $n\times n $ systems \cite{cooper}.



A final point is that the $L^2$ theory generally and \cite{2025arXiv250723645C} in particular only require that the entropy inequality \eqref{ineq:entropy} hold for one single entropy $\eta$. This is a natural assumption as for many physical systems only one entropy exists (see \cite[p.~13-14]{dafermos_big_book} and \cite[p.~54-55]{dafermos_big_book}). This is true for the Euler system \eqref{cl}.

\bigskip

Now we come to state our main results.

We restrict our study to the solutions verifying what the literature often refers to as the ``Strong Trace Property.''
\begin{definition}[Strong Trace Property]\label{defi_trace}
Let $u\in L^\infty(\RR\times\RR^+)$. We say that $u$ verifies the \emph{Strong Trace Property} if for any Lipschitzian curve $t\to X(t)$, there exist two bounded functions $u_-,u_+\in L^\infty(\RR^+)$ such that for any $T>0$
$$
\lim_{n\to\infty}\int_0^T\sup_{y\in(0,1/n)}|u(X(t)+y,t)-u_+(t)|\,dt=\lim_{n\to\infty}\int_0^T\sup_{y\in(-1/n,0)}|u(X(t)+y,t)-u_-(t)|\,dt=0.
$$
\end{definition}

We will consider the following classes of solutions. Fix an open, convex subset $\mathcal{W}$ of $\mathcal{V}$ such that $\bar{\mathcal{W}}\subset\mathcal{V}$, fix any $d \in \mathcal{W}$, and fix a small $\epsilon>0$. Then define
\begin{align} 
    \mathcal{S}_{\textrm{BV},\epsilon}^0\coloneqq\Bigg\{\text{functions } u^0\colon \mathbb{R} \to \mathcal{W} \, \Big| \,
   \|u^0 - d\|_{L^\infty(\mathbb{R})} \leq \epsilon
  \quad \text{and} \quad \|u^0\|_{BV(\mathbb{R})} \leq \epsilon\Bigg\}.\label{small_BV_data}
  \end{align}
  
We also define the following class of potentially very ``wild'' solutions without smallness.
\begin{figure}
    \centering
        \includegraphics[width=\textwidth]{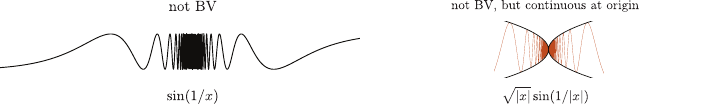}
    \caption{Solutions which behave like $\sin(1/x)$ are outside the BV theory.  However, solutions which behave like $\sqrt{\abs{x}}\sin(1/\abs{x})$ are also outside the BV class, but due to having more controlled oscillations, \emph{verify the Strong Trace Property (\Cref{defi_trace}), live in $\Sweak$, and fit in our $L^2$ theory.}}\label{fig:classes}
\end{figure}
\begin{equation}\label{wild_data}
\mathcal{S}_{\mathrm{weak}}:=
\left\{\,u \in L^\infty(\mathbb{R}\times[0,T); \mathcal{V})\ \big|\
u \text{ is a weak solution to \eqref{cl}, \eqref{ineq:entropy} satisfying \Cref{defi_trace}}\right\},
\end{equation}
for some $T>0$. This is a large class, beyond BV or $\text{BV}_{\text{loc}}$. See \Cref{fig:classes}. Then, we have:

\begin{theorem}[$L^2$-H\"{o}lder stability for full Euler] \label{main-theorem}
Fix $R,\ T > 0$. Then consider $u \in \Sweak$ solving the system~\eqref{cl}. 
Given any initial data $v^0 \in \mathcal{S}_{\textrm{BV},\epsilon}^0$, where $\epsilon > 0$ is sufficiently small, there exists a classical small-BV solution $v$ (via e.g. front tracking). Then for all $\tau \in [0,T]$ we have the bound 
\beq \label{eq:main-est}
    \norm{u(\cdot, \tau) - v(\cdot, \tau)}_{L^2((-R,R))} \leq K \sqrt{ \norm{u(\cdot, 0) - v(\cdot, 0)}_{L^2( (-R -s\tau, R + s\tau))}},
\eeq
for a universal constant $K>0$ and where $s > 0$ is the speed of information (see equation~\eqref{eq:def-info-speed}). 
\end{theorem}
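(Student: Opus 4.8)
The plan is to port the relative-entropy together with $a$-contraction-and-shifts machinery behind \Cref{2x2_theorem} from two conserved quantities to the three-field system \eqref{cl}, working throughout with the single strictly convex entropy $\eta=-S$ fixed above (for which $q\equiv 0$). The basic object is the weighted, spatially localized relative entropy
\[
E(t)=\int_{\mathcal{I}(t)} a(x,t)\,\eta\big(u(x,t)\,\big|\,v(x,t)\big)\,dx,\qquad \eta(u\,|\,v):=\eta(u)-\eta(v)-\eta'(v)\cdot(u-v),
\]
where $\mathcal{I}(t)=(-R-s(\tau-t),\,R+s(\tau-t))$ is the backward information cone emanating from $(-R,R)$ at the target time $\tau$, with $s$ the finite speed from \eqref{eq:def-info-speed}. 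Restricting the functional to this cone is what produces the enlarged interval $(-R-s\tau,R+s\tau)$ on the right-hand side of \eqref{eq:main-est}. Since $\eta$ is strictly convex on the compact set $\bar{\mathcal{W}}$, where both $u$ and $v$ take values, $E(t)$ is comparable to $\|u(\cdot,t)-v(\cdot,t)\|_{L^2(\mathcal{I}(t))}^2$, so it suffices to control $E$ along the flow.

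First I would construct the comparison solution $v$ by front tracking from $v^0\in\mathcal{S}_{\textrm{BV},\epsilon}^0$, so that at each time $v$ is piecewise constant in $x$ with finitely many fronts $x=x_i(t)$ drawn from the three families (two genuinely nonlinear acoustic fields and the linearly degenerate middle field). To each front I attach a shift $X_i(t)$---a curve allowed to travel at a speed $\dot X_i$ different from the front's nominal speed---and I take the weight $a$ to be piecewise constant in $x$, jumping across each front by a small, carefully signed amount. Differentiating $E$ and integrating by parts, the interior contributions are controlled by the entropy production of $u$, which has the good sign by \eqref{ineq:entropy}; the dangerous terms are the localized contributions concentrated at each front. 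This is exactly the role of \Cref{shift_existence_prop}: across a genuinely nonlinear $1$- or $3$-shock the entropy $S$ \emph{strictly increases} by the Lax condition, and one selects $\dot X_i$ together with the weight jump so that this dissipation absorbs the localized relative-entropy flux at the front.

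The genuinely new feature relative to \Cref{2x2_theorem}, and what I expect to be the main obstacle, is the linearly degenerate middle field. Across a $2$-contact discontinuity the velocity $w$ and the pressure $p$ are continuous while $\tau$ and the entropy $S$ jump, and $S$ is merely \emph{conserved}, so there is no dissipation to spend in the shift argument. I would treat the contact by exploiting linear degeneracy directly: choosing the weight jump and shift adapted to the contact, the localized relative-entropy flux there can be bounded by the front strength times the local squared deviation of $u$ from $v$, a quantity that is in turn dominated by $E$ itself together with the front-interaction potential, rather than by any entropy dissipation. Showing that these contact terms, and their interaction with the acoustic fronts, are summable over all fronts and remain uniformly controlled as the front-tracking parameter vanishes---without a second entropy to exploit---is the technical heart of the proof.

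To close the estimate globally I would combine the per-front bounds with the weighted $L^1$-stability functional of Bressan--Liu--Yang (see \Cref{sec:weight-con}), whose Glimm-type interaction potential bounds the cumulative effect of front crossings and cancellations for the $n\times n$ system; this replaces the Bressan--Colombo homotopy used in the $2\times 2$ setting and is precisely what lets the argument reach three (and eventually more) conserved quantities. After optimizing the shift speeds one obtains a Gr\"onwall-type control on $E$, from which the genuine unshifted distance $\|u-v\|_{L^2}$ is recovered by accounting for the displacement between the fronts of $v$ and their shifted positions. Here the square-root loss appears: one shows the total shift displacement needed to realign the fronts is controlled linearly by the initial $L^2$ deviation, while realigning a front of given strength by an amount $h$ costs order $\sqrt{h}$ in $L^2$, and composing these two scalings produces exactly the H\"older-$1/2$ rate in \eqref{eq:main-est}---which, in the presence of shocks, is optimal (see \cite[Section~1.2]{2025arXiv250723645C}). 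The final step is to pass to the front-tracking limit, using the uniform bounds above, to obtain the estimate for the limit solution $v$.
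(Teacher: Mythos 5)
Your overall architecture matches the paper's: compare the wild solution $u$ to a \emph{shifted} front tracking approximation via a weighted relative entropy localized to the information cone, use the quadratic dissipation at shocks (\Cref{shift_existence_prop}) to pay for the shifts, measure the drift between the shifted and unshifted schemes with the Bressan--Liu--Yang functional $\Phi$, and recover the H\"older-$1/2$ rate from a Cauchy--Schwarz/interpolation step. That skeleton is correct.

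The genuine gap is your treatment of the contact discontinuity, which you yourself identify as the heart of the matter but then resolve with an argument that does not close. You propose to bound the localized relative-entropy flux at a $2$-contact by ``the front strength times the local squared deviation of $u$ from $v$,'' and then to dominate this by $E$ itself and run a Gr\"onwall argument. The quantity appearing in the flux is $\eta(u(x_\alpha\pm,t)\,|\,u_{R,L})$ evaluated at the \emph{traces} of $u$ along the front; a pointwise trace value is not controlled by the integral $E(t)=\int a\,\eta(u|v)\,dx$, so no Gr\"onwall inequality of the form $\dot E\le C\,E$ follows. This is precisely the obstruction that forces the shift construction at shocks, and at a linearly degenerate front there is no entropy dissipation available to absorb the trace terms. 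The paper's resolution is different and sharper: by \cite[Theorem 4.1]{MR3475284} (quoted as \Cref{lem:contact}), if the weights on the two sides of the contact are taken in the exact ratio $a_2/a_1=\theta_R/\theta_L$ of the absolute temperatures, the contact dissipation is \emph{nonpositive with no shift at all}, so the contact contributes nothing to be absorbed. This rigid ratio constraint \eqref{con3} is then what drives the construction of the space-time weight $a(x,t)$ in \Cref{sec:weight-con}; your proposal leaves the weight jump at contacts unspecified. Relatedly, the paper obtains direct monotonicity (up to front-tracking errors) of the weighted relative entropy rather than a Gr\"onwall bound, and it extracts the square-root loss by passing through $\|u-v\|_{L^1}$ and the interpolation $\|f\|_{L^2}\le\sqrt{\|f\|_{L^1}\|f\|_{L^\infty}}$; your heuristic that a displacement $h$ of a front costs $O(\sqrt{h})$ in $L^2$ is the same mechanism, but it only becomes a proof once the total weighted displacement is bounded via Cauchy--Schwarz against the shock dissipation accumulated in \eqref{eq:diss-comp}.
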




In a nutshell, the idea of the proof is the following: by artificially changing shock speeds, we can keep an approximate front tracking solution (denoted $\psi$, see \Cref{sec:weight-con}) $L^2$-stable with respect to a ``wild'' solution from $\Sweak$. The artificial ``shifting'' of shocks comes with a price, which we keep track of using a dissipation estimate (\Cref{shift_existence_prop}). Taking the limit of the approximate front tracking solutions gives our result.

\vspace{.2cm}
\noindent\textbf{Notation:} $K$ will denote a universal constant, which may change from line to line during computations.


\section{$L^2$ stability for shock, contact and rarefaction} \label{sec:rel-entropy}
Given our entropy, entropy-flux pair $\eta,\ q$ and states $(u,v) \in \mathcal{V} \times \mathcal{V}$ we define the relative quantities 
\beq
    \eta(u|v) := \eta(u) - \eta(v) - \nabla \eta(v)\cdot (u-v), \qquad q(u;v) := q(u) - q(v) - \nabla \eta(v)\cdot (f(u)-f(v)).
\eeq
For fixed $v$ these quantities then constitute an entropy flux pair for the system~\eqref{cl0}.
We recall the following Lemma. 
\begin{lemma}[Lemma 1 of \cite{Leger2011}] \label{lemma:rel-square}
    Let $W$ be a compact subset of $\mathcal{V}$. There exists a constant $C^* > 0$ such that for all $(u,v) \in \mathcal{V}\times W$ we have 
    \beq \frac{1}{C^*} |u-v|^2 \leq \eta(u|v) \leq C^* |u-v|^2.\eeq
\end{lemma}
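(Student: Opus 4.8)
The plan is to recognize $\eta(u|v)$ as a second-order Taylor remainder and then exploit uniform spectral bounds on the Hessian $\nabla^2\eta$ coming from strict convexity together with compactness. Since $\eta \in C^3(\bar{\mathcal{V}}) \subset C^2(\bar{\mathcal{V}})$ and $\mathcal{V}$ is convex, for any $u \in \mathcal{V}$ and $v \in W$ the whole segment $[v,u]$ lies in $\mathcal{V}$, and Taylor's theorem with Lagrange remainder applied to the scalar function $\eta$ gives
\beq
  \eta(u|v) = \eta(u) - \eta(v) - \nabla\eta(v)\cdot(u-v) = \tfrac12\,(u-v)^\top \nabla^2\eta(\xi)\,(u-v)
\eeq
for some $\xi$ on the segment $[v,u]$. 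Thus the entire statement reduces to controlling the quadratic form $(u-v)^\top \nabla^2\eta(\xi)(u-v)$ uniformly in the admissible $\xi$.

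First I would establish the upper bound. Because $\mathcal{V}$ is bounded, $\bar{\mathcal{V}}$ is compact and $\nabla^2\eta$ is continuous there, so $\Lambda := \max_{\xi \in \bar{\mathcal{V}}} \norm{\nabla^2\eta(\xi)}$ is finite (with $\norm{\cdot}$ the operator norm). This yields $\eta(u|v) \le \tfrac{\Lambda}{2}\,|u-v|^2$ at once.

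For the lower bound I would invoke strict convexity. By hypothesis $\nabla^2\eta(\xi)$ is positive definite, so its smallest eigenvalue $\lambda_{\min}(\xi)$ is positive; since eigenvalues depend continuously on the matrix and $\xi \mapsto \nabla^2\eta(\xi)$ is continuous on the compact set $\bar{\mathcal{V}}$, the minimum $\lambda := \min_{\xi \in \bar{\mathcal{V}}}\lambda_{\min}(\xi)$ is attained and strictly positive. Hence $\eta(u|v) \ge \tfrac{\lambda}{2}\,|u-v|^2$, and taking $C^* := \max\{\Lambda/2,\ 2/\lambda\}$ delivers both inequalities simultaneously.

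The step that deserves the most care is ensuring these spectral bounds are genuinely \emph{uniform} rather than merely pointwise, which is exactly what the standing assumptions buy us: because $\mathcal{V}$ is bounded and $\eta$ is $C^3$ up to $\bar{\mathcal{V}}$, the Hessian is continuous on the compact set $\bar{\mathcal{V}}$, so both $\Lambda$ and $\lambda$ are attained, with $\lambda>0$ by strict convexity. For a physical entropy such as $\eta=-S$ the Hessian degenerates or blows up as the state approaches the singular part of the boundary of the state space (e.g.\ $\tau \to 0$ or $\mathcal{E}\to 0$), so keeping $\bar{\mathcal{V}}$ inside the region of regularity and strict convexity is essential; convexity of $\mathcal{V}$ is likewise needed so that the intermediate point $\xi$ never leaves this region. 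Under these hypotheses the constant is uniform over all $(u,v)\in\mathcal{V}\times W$, as claimed.
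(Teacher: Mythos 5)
Your proof is correct and is essentially the standard argument for this lemma: the paper does not reproduce a proof but cites Lemma~1 of Leger (2011), whose proof is exactly this Taylor expansion of $\eta$ about $v$ combined with uniform positive-definiteness and boundedness of $\nabla^2\eta$ on the compact set $\bar{\mathcal{V}}$ (reading ``strictly convex'' in the usual sense of a positive definite Hessian, as the paper intends). Your observation that under these standing assumptions the bound is in fact uniform over all of $\mathcal{V}\times\mathcal{V}$, not just $\mathcal{V}\times W$, is accurate and harmless.
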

This Lemma states that controlling integrals of our relative entropy is equivalent to controlling the $L^2$ difference of $u$ and $v$. 
From this, given an $i$-shock $(u_L,u_R,\lambda_i(u_L,u_R))$, where $\lambda_i(u_L,u_R)$ is the shock velocity dictated by the Rankine-Hugoniot condition, and a solution $u\in\Sweak$ we consider the weighted relative entropy
\beq 
    E_t(u; a_1, a_2,h) := \int_{-\infty}^{h(t)} a_1\eta(u| u_L)\,dx + \int_{h(t)}^\infty a_2\eta(u| u_R)\,dx
\eeq
with the constants $a_1,a_2 > 0$ and $h$ a Lipschitz curve. 
Differentiating and using both \Cref{defi_trace} and inequality~\eqref{ineq:entropy} we find the dissipation functional $D_t$,
\beq \label{eq:diss_functional}
\begin{aligned}
    \frac{d}{dt}E_t(u; a_1, a_2,h) \leq & \overbracket{a_2\left[q(u(h(t)+,t);u_R)-\dot{h}(t) \eta(u(h(t)+,t)|u_R)\right]}^{\text{dissipation from right side of $h$}}  \\
    &-\quad\overbracket{a_1\left[q(u(h(t)-,t);u_L)-\dot{h}(t)\eta(u(h(t)-,t)|u_L)\right]}^{\text{dissipation from left side of $h$}} =: D_t(u;a_1,a_2,h).
    \end{aligned}
\eeq
Thus, showing the existence of a triple $(a_1,a_2,h)$ which gives bounds on $D_t$ then gives us bounds on the $L^2$ distance of $u$ and $(u_L,u_R,h)$, which is no longer a shock but rather a shifted front. The forthcoming propositions give us such a triple for the shocks and contact discontinuities of~\eqref{cl}. 

\begin{proposition}{\cite[Proposition 3.1]{2025arXiv250723645C}}\label{shift_existence_prop}
Consider system \eqref{ineq:entropy}, \eqref{cl}.
Let $d\in \Nu$. Then there exist constants $\hat{\lambda},\ K,\ C_1,\ \eps,$ and $ \alpha>0$ such that the following holds.\vskip0.1cm
Consider any shock  $(u_L,u_R)$ with $|u_L-d|+|u_R-d|\leq \eps$. Consider also any $u\in \Sweak$, any $T>0$, $T_{start} \in [0,T]$, and any $x_0\in \RR$.
 Let  $s_0=|u_L-u_R|$ be the size of the shock.
Then for any $a_1,\ a_2 > 0$ satisfying 
\begin{eqnarray}
1- 2C_1 s_0\leq \frac{a_2}{a_1}\leq 1-\frac{C_1s_0}{2} &&\text{ if } (u_L,u_R) \text{ is a 1-shock}\label{con1}\\
1+\frac{C_1s_0}{2}\leq \frac{a_2}{a_1}\leq 1+2C_1s_0 && \text{ if } (u_L,u_R) \text{ is a 3-shock}\label{con2},
\end{eqnarray}
there exists a Lipschitz shift function  $h\colon[T_{start},T]\to\mathbb{R}$, with $h(T_{start})=x_0$, such that our dissipation functional $D_t$ satisfies the bound
\begin{equation} \label{diss:shock}
\frac{d}{dt}E_t(u; a_1, a_2,h) \leq D_t \leq -a_2K s_0 (\dot h(t) - \lambda_i(u_L,u_R))^2
\end{equation}
for almost all  $t\in[T_{start},T]$, where $\lambda_i(u_L,u_R)$ is the Rankine-Hugoniot velocity of the $i$-shock $(u_L,u_R)$, $i = 1\text{ or }3$.

\vskip0.2cm

Moreover, if $(u_L,u_R)$ is a 1-shock, then for almost all  $t\in[T_{start},T]$:  
\beq \label{eq:1-shock-shift-bound}
-\frac{\hat{\lambda}}{2}\leq \dot{h}(t) \leq -\alpha<0.
\eeq
Similarly, if $(u_L,u_R)$ is a 3-shock,  then for almost all  $t\in[T_{start},T]$: 
\beq \label{eq:3-shock-shift-bound}
0< \alpha\leq \dot{h}(t) \leq\frac{\hat{\lambda}}{2}.
\eeq



\end{proposition}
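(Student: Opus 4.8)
The plan is to reduce the entire statement to a single pointwise inequality in the traces, exploiting that the dissipation functional is \emph{affine} in the shift speed. Writing the one-sided traces as $u_\pm = u(h(t)\pm,t)$ and collecting terms in \eqref{eq:diss_functional}, we have $D_t = B - \dot h(t)\,A$, where
\[
A := a_2\,\eta(u_+|u_R) - a_1\,\eta(u_-|u_L), \qquad B := a_2\,q(u_+;u_R) - a_1\,q(u_-;u_L).
\]
With the shorthand $\pi(u;v) := q(u;v) - \lambda_i\,\eta(u|v)$ and the substitution $\mu := \dot h - \lambda_i$, the target bound \eqref{diss:shock} says precisely that the upward parabola $\mu \mapsto a_2 K s_0\,\mu^2 - A\,\mu + \widetilde B$ attains a nonpositive value, where $\widetilde B := B - \lambda_i A = a_2\,\pi(u_+;u_R) - a_1\,\pi(u_-;u_L)$. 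By the discriminant, such a $\dot h$ exists if and only if
\begin{equation}\label{star}
a_2\,\pi(u_+;u_R) - a_1\,\pi(u_-;u_L) \;\le\; \frac{\bigl(a_2\,\eta(u_+|u_R) - a_1\,\eta(u_-|u_L)\bigr)^2}{4\,a_2\,K\,s_0}.
\end{equation}
So the heart of the matter is \eqref{star}, and everything else (the curve $h$, the speed bounds) is downstream of it.

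The conceptual crux is that \eqref{star} is \emph{false} for arbitrary trace pairs. A Rankine--Hugoniot computation gives $\pi(u_L;u_R) = -\pi(u_R;u_L) \ge 0$, equal to the entropy dissipation of the shock, which by genuine nonlinearity is cubic, $\sim s_0^3$; hence in the ``reversed'' configuration $u_- \approx u_R,\ u_+ \approx u_L$ one has $\widetilde B \sim (a_1+a_2)s_0^3 > 0$ while $A = O(s_0^3)$, so the right side of \eqref{star} is of order $s_0^5$ and the inequality cannot hold. The resolution, and the place where the entropy admissibility of $u$ enters essentially, is that we never need \eqref{star} for such pairs: at almost every $t$ the curve $h$ either avoids the discontinuities of $u$ (so $u_- = u_+$) or coincides with an admissible discontinuity of $u$. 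Since $u_R \to u_L$ is the time reversal of a Lax-admissible $i$-shock, its one-sided characteristic speeds violate Lax, so it is admissible for no family; the reversed configuration therefore simply cannot occur as a trace of $u \in \Sweak$. Identifying this restriction and proving that it suffices is, conceptually, the main obstacle.

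I would therefore establish \eqref{star} only for (i) $u_+ = u_- =: u_*$ and (ii) $(u_-,u_+)$ an admissible discontinuity of $u$ near the shock. The two structural inputs are: the second-order identity $\nabla_u^2 \pi(\cdot;v)\big|_{v} = \eta''(v)\,(f'(v) - \lambda_i I)$, which by genuine nonlinearity and the Lax inequalities is positive semidefinite (degenerate only in the $i$-eigendirection) and so controls the sign and size of the $\pi$-terms; and \Cref{lemma:rel-square}, which pins the relative entropies to $|u-v|^2$. With the weight window \eqref{con1}--\eqref{con2} forcing $a_2 - a_1 \sim \mp s_0$, this asymmetry supplies a favorable term of order $s_0 \cdot s_0^2$ that, after a case split on the location of $u_*$ (resp.\ of the admissible jump) relative to $u_L, u_R$, dominates the positive contributions and closes \eqref{star} once $K$ is taken small. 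I expect this weighted, three-field bookkeeping to be the main \emph{computational} obstacle: unlike the $2\times 2$ case one must expand in all three wave coordinates and track the linearly degenerate ($2$-)direction throughout.

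Given \eqref{star}, I would take $\dot h = V(u_-,u_+)$ to be the root of the parabola nearest $\lambda_i$ (equivalently $\dot h = \lambda_i$ whenever $\widetilde B \le 0$), so that \eqref{diss:shock} holds by construction; since the traces lie within $O(\eps)$ of $d$, the chosen root satisfies $|\dot h - \lambda_i| = O(\eps)$. Promoting this pointwise velocity to an actual Lipschitz curve $h$ with $h(T_{start}) = x_0$ is the standard generalized-characteristics/Filippov construction of the relative-entropy literature, legitimate because $V$ is bounded and the traces are well defined through the Strong Trace Property (\Cref{defi_trace}); along this $h$ the differential inequality \eqref{eq:diss_functional}, and hence \eqref{diss:shock}, holds for a.e.\ $t$. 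Finally, the speed bounds \eqref{eq:1-shock-shift-bound}--\eqref{eq:3-shock-shift-bound} follow because the Lagrangian Euler eigenvalues satisfy $\lambda_1 = -c < 0 = \lambda_2 < c = \lambda_3$, so for $|u_L - d| + |u_R - d| \le \eps$ the shock speed $\lambda_i$ is within $O(\eps)$ of $\mp c(d)$, bounded away from the contact speed $0$; as $|\dot h - \lambda_i| = O(\eps)$, choosing $\hat\lambda, \alpha, \eps$ appropriately keeps $\dot h$ inside the asserted one-signed interval.
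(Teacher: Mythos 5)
This paper does not actually prove \Cref{shift_existence_prop}; it imports it verbatim from \cite[Proposition 3.1]{2025arXiv250723645C}, so the comparison is with the construction there and its antecedents in the $a$-contraction literature. Two of your structural moves are genuinely aligned with that strategy: the observation that $D_t$ is affine in $\dot h$, and the trace dichotomy (for a.e.\ $t$ either $u_+=u_-$ or $(u_-,u_+,\dot h)$ is an entropic Rankine--Hugoniot discontinuity of $u$). But there is a gap that breaks the argument where you pass from your discriminant condition to the curve $h$ and to the speed bounds \eqref{eq:1-shock-shift-bound}--\eqref{eq:3-shock-shift-bound}: you assert that ``the traces lie within $O(\eps)$ of $d$.'' They do not. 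Only the shock states $u_L,u_R$ of the small-BV front-tracking approximation are $\eps$-close to $d$; the solution $u\in\Sweak$ is a large, ``wild'' $L^\infty$ function whose traces $u_\pm$ range over all of $\mathcal{V}$. Consequently $A=a_2\eta(u_+|u_R)-a_1\eta(u_-|u_L)$ and $\widetilde B$ are of order one, the root of your parabola nearest $\lambda_i$ is not $\lambda_i+O(\eps)$, and nothing in your construction keeps $\dot h$ in the one-signed window $[-\hat\lambda/2,-\alpha]$. Those uniform speed bounds are not cosmetic: they are what preserves the ordering of fronts and the information cone in the remainder of the proof of \Cref{main-theorem}. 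In the actual argument the velocity is prescribed in advance by a case split on where the trace $u_+$ sits --- roughly, $\dot h$ equals the Rankine--Hugoniot speed on a ``good'' set of states and is pushed to an extreme admissible speed on the complementary ``bad'' set, where the term $-\dot h\,\bigl(a_2\eta(u_+|u_R)-a_1\eta(u_-|u_L)\bigr)$ then supplies the negativity --- so that \eqref{eq:1-shock-shift-bound}--\eqref{eq:3-shock-shift-bound} hold by construction rather than as a consequence of a root estimate.

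The same large-data issue undermines your plan for verifying the discriminant inequality itself: a Taylor expansion of $\pi(\cdot;v)$ about $v$ (your Hessian identity $\eta''(v)(f'(v)-\lambda_i I)$, whose sign moreover flips between the $1$- and $3$-families) only controls states near $u_L,u_R$, whereas the inequality must hold uniformly over all of $\mathcal{V}$. Making it global is precisely the hard, system-specific content of the cited proposition; for Euler it requires explicit computations with the physical entropy of \eqref{eos}, not a perturbative argument, and it is the reason the weight window \eqref{con1}--\eqref{con2} has the particular width $\sim C_1 s_0$. Finally, your exclusion of the ``reversed shock'' trace configuration by appeal to the Lax condition is not free: $u$ is only assumed to dissipate the single entropy in \eqref{ineq:entropy}, and ruling out non-Lax entropic discontinuities under a single entropy condition is itself a nontrivial theorem --- the actual proof instead establishes the dissipation estimate directly for every pair of traces compatible with Rankine--Hugoniot and \eqref{ineq:entropy}, not only for Lax-admissible ones.
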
 

\vskip0.3cm


At contact discontinuities we experience weighted entropy decay without shift due to \cite[Theorem 4.1]{MR3475284}.

\begin{lemma}{\cite[Theorem 4.1]{MR3475284}}\label{lem:contact} 
Consider a contact discontinuity $(u_L,u_R)$ of the system~\eqref{cl}  located at $x = \beta$ and a solution $u \in \Sweak$. 
Then the associated relative entropy decays without shift when we select our weights to be the absolute temperature at $u_L$ and $u_R$:
\beq
    \frac{d}{dt} E_t(u; \theta_L, \theta_R, \beta)\leq D_t \leq 0.
\eeq

\end{lemma}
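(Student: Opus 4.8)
My plan is to exploit the two structural facts that separate a contact of \eqref{cl} from a genuinely nonlinear shock: the $2$-field is linearly degenerate, so in Lagrangian coordinates the contact is \emph{stationary} ($\lambda_2\equiv 0$, the contact is a particle path); and across it the velocity and pressure are continuous, $w_L=w_R=:w^{*}$ and $p_L=p_R=:p^{*}$, while only $\tau$ and $S$ (hence $\theta$) jump. Stationarity lets me take the shift curve to be the contact location itself, $h(t)\equiv\beta$, so that $\dot h=0$ and the dissipation functional \eqref{eq:diss_functional} collapses to $D_t=\theta_R\,q(u(\beta+,t);u_R)-\theta_L\,q(u(\beta-,t);u_L)$. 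No shifting is needed precisely because there is no genuinely nonlinear compression to balance.

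First I would compute the relative entropy flux explicitly for full Euler. Since $q\equiv 0$, we have $q(u;v)=-\nabla\eta(v)\cdot(f(u)-f(v))$; using $\nabla\eta(v)=\theta_v^{-1}(-p_v,\,w_v,\,-1)$, which follows from the Gibbs relation \eqref{gib} after writing $\mathcal{E}=E-\frac12 w^2$, together with $f(u)=(-w,\,p,\,wp)$, a short calculation cancels the cross terms and leaves the crucial identity
\beq
q(u;v)=\frac{1}{\theta_v}\,(w-w_v)(p-p_v).
\eeq
The flux thus factors through the \emph{mechanical} variables $(w,p)$ alone. Writing $u_\pm:=u(\beta\pm,t)$ for the traces (which exist by the Strong Trace Property, \Cref{defi_trace}) and inserting $w_L=w_R=w^{*}$, $p_L=p_R=p^{*}$, the temperature weights exactly cancel the factors $\theta_R^{-1}$ and $\theta_L^{-1}$, so that
\beq
D_t=(w_+-w^{*})(p_+-p^{*})-(w_--w^{*})(p_--p^{*}).
\eeq
This is where the choice of weights pays off: the temperatures, which are the integrating factor for the linearly degenerate field, are exactly what is required to make the two boundary terms comparable.

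Recognizing $(w-w^{*})(p-p^{*})=b\cdot f(u)+w^{*}p^{*}$ with the constant covector $b:=(p^{*},-w^{*},1)$, the additive constant cancels in the difference and I obtain the compact form
\beq
D_t=b\cdot\bigl(f(u_+)-f(u_-)\bigr).
\eeq
The key alignment is that $b=-\theta_L\,\nabla\eta(u_L)=-\theta_R\,\nabla\eta(u_R)$, i.e.\ $b$ is proportional to $\nabla\eta$ evaluated at either contact state. In the two clean cases this already gives the result: if the trace of $u$ is continuous at $\beta$ (the generic situation) then $u_+=u_-$ and $D_t=0$, and if $u$ carries a genuinely stationary discontinuity along $\beta$ then Rankine--Hugoniot with speed $0$ gives $f(u_+)=f(u_-)$ and again $D_t=0$.

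It remains to prove $D_t\le 0$ in general, and this is the step I expect to be the crux. The difficulty is that $u\in\Sweak$ is merely $L^\infty$, so its traces $u_\pm(t)$ are a priori unconstrained, and I must rule out a wrong sign coming from a discontinuity of $u$ that is momentarily located at $\beta$ while moving with nonzero speed. The plan is to use the entropy admissibility of $u$: the pair $(\eta(\cdot|v),q(\cdot;v))$ is itself an entropy pair, so across any discontinuity of $u$ with speed $\sigma$ the relative inequality $q(u_+;v)-q(u_-;v)\le\sigma\,[\eta(u_+|v)-\eta(u_-|v)]$ holds; feeding in the alignment $b\propto\nabla\eta$ at the contact together with the linear degeneracy of the $2$-field should force $b\cdot(f(u_+)-f(u_-))\le 0$. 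Making this sign argument rigorous for arbitrary $L^\infty$ traces, rather than for a single admissible jump, is the main obstacle, and is exactly the content we import from \cite[Theorem 4.1]{MR3475284}.
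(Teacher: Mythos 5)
The paper does not prove this lemma at all --- it is imported verbatim from \cite[Theorem 4.1]{MR3475284} --- so there is no internal proof to compare against; what you have written is essentially the content of the cited result, and your computation checks out. With $\eta=-S$, $q\equiv 0$, state vector $(\tau,w,E)$ and flux $f(u)=(-w,p,wp)$, the Gibbs relation does give $\nabla\eta(v)=\theta_v^{-1}(-p_v,w_v,-1)$, and the cancellation leading to $q(u;v)=\theta_v^{-1}(w-w_v)(p-p_v)$ is correct; the temperature weights then cancel the $\theta^{-1}$ factors exactly because $w$ and $p$ are the quantities continuous across a $2$-contact, and your reduction to $D_t=b\cdot\bigl(f(u_+)-f(u_-)\bigr)$ with $b=(p^*,-w^*,1)=-\theta_L\nabla\eta(u_L)=-\theta_R\nabla\eta(u_R)$ is the right way to see the structure.

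The one step you defer to the reference --- the sign of $D_t$ for arbitrary traces of an $L^\infty$ solution --- is not actually an obstacle, and the entropy inequality is the wrong tool for it. Since $h\equiv\beta$ is a Lipschitz curve and $u$ has strong traces in the sense of \Cref{defi_trace}, testing the weak formulation of \eqref{cl} against functions concentrating on $\{x=\beta\}$ yields the Rankine--Hugoniot relations along that curve, $f(u_+(t))-f(u_-(t))=\dot h(t)\,\bigl(u_+(t)-u_-(t)\bigr)=0$ for almost every $t$; the first two components give $w_+=w_-$ and $p_+=p_-$ a.e., hence $D_t=b\cdot\bigl(f(u_+)-f(u_-)\bigr)=0$ a.e., which is the claimed inequality with equality. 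Your worry about a discontinuity of $u$ crossing $\beta$ with nonzero speed concerns only a null set of times and is already absorbed by the ``a.e.'' Note also that the route you sketch in the last paragraph --- applying the relative entropy inequality across individual admissible jumps of $u$ --- would not close the argument as stated, because the traces $u_\pm$ entering $D_t$ are taken along the fixed line $x=\beta$, not along the discontinuities of $u$; it is conservation (Rankine--Hugoniot along $x=\beta$), not admissibility, that pins them down.
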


We note that this Lemma differs from \Cref{shift_existence_prop} in that it requires the ratio 
\beq\label{con3}
\frac{a_2}{a_1}= \frac{\theta_R}{\theta_L}
\quad\hbox{for a contact},
\eeq
where $a_1,a_2$ are from the context of \Cref{shift_existence_prop}.
This impacts the construction of our space-time weight in \Cref{sec:weight-con}, where we must be careful to preserve this exact equality. 

Finally, for dissipation at the discretized rarefaction fans which arise in front tracking, we have
\begin{proposition}{\cite[Proposition 4.4]{MR4487515}}\label{dissipation_discrete_rarefaction}
There exists a constant $C>0$ such that the following is true.
For any rarefaction wave $\bar{u}(y)$ $v_L\leq y\leq v_R$ of \eqref{cl}, let 
$$
\delta=|v_L-v_R|+\sup_{y\in [v_L,v_R]} |u_L-\bar{u}(y)|, \qquad \bar{u}(v_L)=u_L, \ \bar{u}(v_R)=u_R.
$$
 Then for any $u\in \Sweak$, any $v_L\leq v\leq v_R$,  and any $t>0$ we have the bound
 $$
 \int_{0}^{t} \left\{
q(u(tv+,t);u_R)-q(u(tv-,t);u_L)-v\left( \eta(u(tv+,t)|u_R) -\eta(u(tv-,t)|u_L) \right)
 \right\}\,dt
 \leq C\delta |u_L-u_R|
 t.
 $$
\end{proposition}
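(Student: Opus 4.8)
The plan is to prove the pointwise-in-time bound $D_t\le C\delta\,|u_L-u_R|$ on the integrand and then integrate: since $\delta$ and $|u_L-u_R|$ are attached to the fixed rarefaction and do not depend on $t$, integrating such a bound over $[0,t]$ immediately yields the factor $t$ on the right-hand side. Writing $a:=u(tv-,t)$ and $b:=u(tv+,t)$ for the one-sided traces, which exist by the Strong Trace Property (\Cref{defi_trace}), the integrand is exactly the dissipation functional of \eqref{eq:diss_functional} with unit weights $a_1=a_2=1$ for the ``artificial front'' $(u_L,u_R)$ travelling at the intermediate speed $v$, namely
\[
D_t=\bigl[q(b;u_R)-v\,\eta(b|u_R)\bigr]-\bigl[q(a;u_L)-v\,\eta(a|u_L)\bigr].
\]

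First I would introduce the pivot state $c:=\bar u(v)$, the unique state on the rarefaction curve whose $i$-characteristic speed equals the front speed, $\lambda_i(c)=v$. The point of this choice is the eigen-relation $f'(c)\,r_i(c)=v\,r_i(c)$, i.e.\ $g'(c)\,r_i(c)=0$ for $g(w):=f(w)-v\,w$. I would then split $D_t=\mathcal I+R$, where
\[
\mathcal I:=\bigl[q(b;c)-v\,\eta(b|c)\bigr]-\bigl[q(a;c)-v\,\eta(a|c)\bigr]
\]
is the dissipation of $u$ across the line $x=vt$ measured against the \emph{single} constant state $c$, and $R$ collects the error from replacing the comparison state $c$ by $u_R$ on the right and by $u_L$ on the left. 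Because $c$ is constant, $(\eta(\cdot|c),q(\cdot;c))$ is itself an entropy pair, so the same computation that produced \eqref{eq:diss_functional} (using \Cref{defi_trace} and \eqref{ineq:entropy}) gives the Rankine--Hugoniot--type sign $\mathcal I\le 0$. It therefore remains to control $R$.

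To estimate $R$ I would Taylor-expand the comparison-state dependence around $c$. A direct computation gives $\nabla_w\bigl[q(\cdot;w)-v\,\eta(\cdot|w)\bigr]=-\nabla^2\eta(w)\,[g(\cdot)-g(w)]$, and the two defining identities — the compatibility $q'=\eta'f'$ and the eigen-relation above — force the first-order-in-$\delta$ contributions to cancel. Concretely, the ``constant'' part of $R$ (obtained by freezing the traces at $c$ and moving only the comparison states) has vanishing gradient at $c$, so it is genuinely $O(|u_L-c|^2+|u_R-c|^2)=O(\delta^2)=O(|u_L-u_R|^2)$, using that along the genuinely nonlinear fields where rarefactions occur $|u_L-u_R|\simeq|v_L-v_R|\simeq\delta$.

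The hard part is the remaining cross terms in $R$, which pair the small displacements $u_R-c,\;u_L-c=O(\delta)$ (both essentially along $r_i(c)$, with opposite signs since $v_L<v<v_R$) against $g(b)-g(c)$ and $g(a)-g(c)$. A priori these are only $O(\delta)\cdot O(1)$, because the wild traces $a,b$ may lie \emph{far} from $c$, so a naive estimate yields $O(\delta)$ rather than the required $O(\delta^2)$. The resolution, which I expect to be the genuine obstacle, is a one-sided estimate uniform in the far field: one must show $r_i(c)^{\top}\nabla^2\eta(c)\,[g(w)-g(c)]\ge 0$ for the relevant states $w$ — this scalar vanishes to second order at $c$ (by $g'(c)r_i(c)=0$ together with the symmetry of $\nabla^2\eta\,f'$, a standard consequence of $q'=\eta'f'$) and is bounded below via the strict convexity of $\eta$ (\Cref{lemma:rel-square}) — while whatever part is not sign-definite for large $|b-c|,|a-c|$ is absorbed into the strictly negative dissipation $\mathcal I$, which by construction becomes large exactly when $a,b$ move away from $c$. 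Combining these gives $R\le C\delta\,|u_L-u_R|$, hence $D_t=\mathcal I+R\le C\delta\,|u_L-u_R|$ for a.e.\ $t$, and integrating over $[0,t]$ completes the proof.
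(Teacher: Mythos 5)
Your skeleton is sound and matches the structure of the argument in the cited source \cite{MR4487515} (note this paper itself only quotes the result, so the comparison is with that proof): the pointwise-in-time reduction, the pivot $c=\bar u(v)$ with $\lambda_i(c)=v$, the sign $\mathcal I\le 0$ (which is correct: it is the restriction to the line $x=vt$ of the non-positive measure $\partial_t\eta(u|c)+\partial_x q(u;c)$, using \Cref{defi_trace} and \eqref{ineq:entropy}), the gradient formula $\nabla_w[q(\cdot\,;w)-v\eta(\cdot|w)]=-\nabla^2\eta(w)[g(\cdot)-g(w)]$, and the identification of the cross terms as the crux. The gap is precisely in the step you flag as ``the genuine obstacle'': neither of your two proposed mechanisms closes it. First, the absorption into $\mathcal I$ cannot work, because $\mathcal I$ does \emph{not} become large when the traces move away from $c$: if $u$ happens to be continuous across the line $x=vt$, then $a=b$ and $\mathcal I=0$ identically, no matter how far $a=b$ lies from $c$ in state space. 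The entropy production of a wild solution concentrated on a given line can vanish while its traces are arbitrary, so there is nothing to absorb into. Second, the one-sided bound $r_i(c)^{\top}\nabla^2\eta(c)[g(w)-g(c)]\ge 0$ for all $w$ in the state space does \emph{not} follow from strict convexity of $\eta$. Since $r_i(c)^\top\nabla^2\eta(c)(f'(c)-vI)=0$, this quantity equals $r_i(c)^\top\nabla^2\eta(c)f(w|c)$ with $f(w|c)=f(w)-f(c)-f'(c)(w-c)$, and its sign is a genuine structural hypothesis on the \emph{flux} (a global, quantified form of genuine nonlinearity of the extremal fields). In the scalar case it is exactly convexity of $f$, not of $\eta$; for a system it can fail even with $\eta$ strictly convex.

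The actual resolution is that this sign condition is one of the standing hypotheses of the framework of \cite{MR4487515} and is verified by explicit computation for the Euler system. Granting it, the cross terms close exactly as in your outline: writing $\nabla\eta(c)-\nabla\eta(u_R)=-\beta_R(v_R-v)\,\nabla^2\eta(c)r_i(c)+\mathcal O(\delta^2)$ and $\nabla\eta(c)-\nabla\eta(u_L)=\beta_L(v-v_L)\,\nabla^2\eta(c)r_i(c)+\mathcal O(\delta^2)$ with $\beta_L,\beta_R>0$, the two cross terms become $-\beta_R(v_R-v)\,r_i(c)^\top\nabla^2\eta(c)f(b|c)$ and $-\beta_L(v-v_L)\,r_i(c)^\top\nabla^2\eta(c)f(a|c)$ up to $\mathcal O(\delta^2)$ errors, and both carry a non-positive coefficient precisely because $v$ sits between $v_L$ and $v_R$ (the monotonicity of $\nabla\eta(\bar u(\cdot))$ along the rarefaction gives opposite signs on the two sides of $c$, matching the opposite signs of $u_R-c$ and $u_L-c$ along $r_i(c)$). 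So the cross terms are non-positive rather than absorbed, and $D_t\le \mathcal I+\mathcal O(\delta^2)\le C\delta|u_L-u_R|$. You should replace the appeal to convexity of $\eta$ and to absorption into $\mathcal I$ by this flux hypothesis and its verification for \eqref{cl}; without that, the argument does not go through.
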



\section{The modified front tracking scheme and weight function $a(x,t)$\label{sec:weight-con}}
One key step in the proof of the main theorem is to estimate the difference between a front tracking $\nu$-approximate solution $u(x,t)$ with true Rankine--Hugoniot speeds (up to the $\mathcal{O}(\nu)$ error) and a \emph{shifted} $\nu$-approximate solution $\psi(x,t)$ where, given a fixed $u\in\Sweak$, shocks $(u_L,u_R)$ move according to the function $h$ from \Cref{shift_existence_prop} except for small perturbations to avoid interactions of more than two waves.

Here we use the same definition of front tracking scheme in \cite{CKV2} to define the $\nu$-approximate solutions: both the un-shifted and shifted front tracking schemes.

We note the un-shifted front tracking scheme in \cite{CKV2} is a special case of the $\nu$-approximate front tracking scheme used in  \cite{BLY} (named as ``$\epsilon$-approximate'' front tracking scheme in \cite{BLY}).

In fact, in the front tracking scheme of \cite{CKV2}, there exists a small constant $\nu$, controlling three types of errors, as in \cite{BLY}:
\begin{itemize}
\item Errors in the speeds of shock and rarefaction fronts, to avoid interaction of more than two waves.
\item The maximum strength of any rarefaction front.
\item The total strength of all non-physical waves.
\end{itemize}

In the front tracking scheme, after discretizing the initial data to a piecewise-constant approximation, we let initial waves to propagate until the next interaction. And we define two Riemann solvers (accurate solver and simplified solver) to treat each wave interaction. In these two solvers, we use four types of wave fronts: shock, rarefaction jump, contact discontinuity and non-physical wave. Thanks to the decay of the Glimm functional in \Cref{prop:delta} below, the total number of wave interactions is finite and the BV norm of approximate solutions (shifted or un-shifted) has a uniform bound. So one can prove the convergence of the scheme to a BV solution using Helly's theorem. We refer the reader to \cite{CKV2} and \cite{Bbook} for details.

For any $\nu$-approximate (shifted or un-shifted) solution $u$, we use standard notions $L$ and $Q$ for total variation and Glimm potential at any time $t$:
\beq\label{Ldef}
L(u)\coloneqq \hbox{TV}(u)(t),\qquad\quad 
Q(u)\coloneqq\sum_{i,j:\hbox{approaching waves}}|\sigma_i| |\sigma_j|(t),
\eeq
where the summation runs over all pairs of approaching waves, with strengths $|\sigma_i|$ and $|\sigma_j|$, at time $t$ (see \cite[p.~5]{BLY} for details). 

\begin{proposition}{\cite{CKV2}}\label{prop:delta}
There exists $\kappa>0$ such that for any $\epsilon$ small enough, where the initial total variation of the (shifted, or un-shifted) $\nu$-approximate solution $u$ is less than $\epsilon$, then the functional $\Upsilon(u)\coloneqq L(u)+\kappa Q(u)$ is decreasing in time. 
\end{proposition}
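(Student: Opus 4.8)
The plan is to run the classical Glimm/front-tracking interaction analysis and to check that the artificial shifting of shocks leaves its essential algebraic structure untouched. The functional $\Upsilon(u)=L(u)+\kappa Q(u)$ is piecewise constant in $t$ and jumps only at the finitely many interaction times, so it suffices to show that $\Upsilon$ does not increase across any single interaction. The decisive observation is that both Riemann solvers (accurate and simplified) act only on the left and right states of the two colliding fronts and are completely insensitive to the front speeds. Hence, whenever two fronts meet --- whether their speeds are the true Rankine--Hugoniot speeds (up to the $\bigO(\nu)$ error) or the artificially shifted speeds governed by the function $h$ from \Cref{shift_existence_prop} --- the outgoing wave strengths are exactly those produced in the un-shifted scheme of \cite{CKV2,BLY}. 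The shifting changes only \emph{when} and \emph{where} collisions occur, never the combinatorics of the wave strengths.

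First I would invoke the standard local interaction estimates. For a collision of two approaching fronts of strengths $|\sigma'|$ and $|\sigma''|$ one has the Glimm bounds $\Delta L\leq C|\sigma'||\sigma''|$ and $\Delta Q\leq -|\sigma'||\sigma''|+C\,L\,|\sigma'||\sigma''|$: the negative term reflects that the two colliding fronts are no longer approaching after the interaction, while the positive term accounts for the new approaching pairs formed by the outgoing fronts against the rest of the solution (the outgoing fronts, being ordered by family, are mutually non-approaching). Provided the total variation stays small, $\Delta Q\leq -\tfrac12|\sigma'||\sigma''|$, whence
\beq
\Delta\Upsilon=\Delta L+\kappa\,\Delta Q\leq \Big(C-\tfrac{\kappa}{2}\Big)|\sigma'||\sigma''|\leq 0
\eeq
as soon as $\kappa\geq 2C$. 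Interactions involving non-physical waves are handled identically: there the increase in $L$ is absorbed by the strength of the colliding physical front, exactly as in \cite{BLY}.

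The smallness of $L$ needed above is secured by the usual continuation argument. Since $Q(0)\leq \tfrac12 L(0)^2\leq\tfrac12\epsilon^2$, we have $\Upsilon(0)\leq \epsilon+\tfrac{\kappa}{2}\epsilon^2$; and as long as $L$ remains small enough for the interaction estimate above to apply, $\Upsilon$ is non-increasing, so $L(t)\leq\Upsilon(t)\leq\Upsilon(0)=\bigO(\epsilon)$. For $\epsilon$ small this self-consistently keeps $L$ below the threshold for all $t$, closing the bootstrap.

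The main obstacle, and the only point where the shifts genuinely enter, is verifying that shifting preserves the speed ordering underlying the very notion of ``approaching waves.'' Here I would use the sign and magnitude bounds from \Cref{shift_existence_prop}, namely $-\hat{\lambda}/2\leq\dot h\leq -\alpha<0$ for a $1$-shock and $0<\alpha\leq\dot h\leq\hat{\lambda}/2$ for a $3$-shock. These keep every shifted front strictly inside its own characteristic speed band, so that a shifted $1$-front can never overtake a $3$-front and the partition of wave pairs into approaching and non-approaching coincides with that of the un-shifted scheme. The small perturbations introduced to prevent triple collisions alter speeds by at most $\bigO(\nu)$ and hence do not disturb this ordering. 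With both the combinatorial structure and the interaction estimates thus identical to the classical case, the standard argument produces the required $\kappa$ and the monotonicity of $\Upsilon$.
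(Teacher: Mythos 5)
Your argument is correct and is essentially the standard one: the paper gives no proof here (it quotes the result from \cite{CKV2}), and the underlying argument in that reference is exactly the classical Glimm interaction analysis you describe, combined with the two observations you correctly isolate as the only new content --- that the Riemann solvers depend on states but not on front speeds, and that the shifted speeds from \Cref{shift_existence_prop} stay within their characteristic speed bands so the approaching/non-approaching structure (and hence the decay of $Q$ at each collision) is unchanged. No gap to report.
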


\bigskip

We need to construct a space-time weight so that when we calculate the dissipation of entropy for $\psi$ at shocks and contact discontinuities, we have the proper weighting of the left and right states at each discontinuity (e.g. $a_1,a_2$ in \Cref{shift_existence_prop} or $\theta_L,\theta_R$ in \Cref{lem:contact}). See \Cref{sec:diss_calc}, below, for details.

\textbf{Space-time weight.} Now we revisit the weight function $a(x,t)$ in \cite{CKV2} and will use it for the current paper.



First, we give our definition of $\bar\sigma$. In
particular,
for a 2-wave, we define 
\beq\label{2waves}
\bar\sigma=\frac{1}{C_1J}\Delta \theta,
\eeq where $\Delta g=g_R-g_L$ measures the difference of $g$ on two sides of waves, the constant $C_1$ is defined in Proposition \ref{shift_existence_prop} and
$J=\frac{1}{2}\inf_{\bar{\mathcal{W}}}  \theta>0$
where the temperature $\theta$ defined in \eqref{eos} always has a positive lower bound.
This infimum includes all possible states our approximation $u_\nu$ can achieve when $\epsilon$ in~\eqref{small_BV_data} is chosen sufficiently small. 

For any 1-wave and 3-wave, we choose $\bar\sigma=\pm|\Delta u|$, where $\bar\sigma$ takes negative sign on a shock and positive sign on a rarefaction front. For non-physical wave, we choose $\bar\sigma=|\Delta u|$. Finally, at each discontinuity $x_i(t)$, we label the corresponding $\bar\sigma$ as  $\bar\sigma^{(i)}$.

We introduce the following measure $\mu(\cdot,t)$ as a sum of Dirac measures in $x$:
\[\mu(x,t)= -\sum_{i: 1\text{-shock}} |\bar\sigma^{(i)}| \delta_{\{x_i(t)\}}+\sum_{i: 3\text{-shock}} |\bar\sigma^{(i)} |\delta_{\{x_i(t)\}}+\sum_{i: 2\text{-wave}}\frac{J\hat a}{\hat \theta}\bar\sigma^{(i)}\delta_{\{x_i(t)\}}
\]
where $\hat a$ and $\hat \theta$ denote the $a$ and $\theta$ values to the left of the contact. Then 
\beq\label{a_def}
a(x,t)=1+ C_1\Big(L(t)+\kappa Q(t)+\int_{-\infty}^x \mu(x,t)\,dx\Big),
\eeq
where the constant $C_1$ is defined in Proposition \ref{shift_existence_prop}.
The weight $a$ is constant across the two sides of a rarefaction or a non-physical wave.
\footnote{More precisely, we need to define $a$ inductively from the extreme left. Assume $a(\cdot,t)$ consists of $n$ constant states $a^{(1)}$ to $a^{(n)}$ from left to right. Define
$
a^{(1)}(t)=1+ C_1(L(t)+\kappa Q(t)),$ then
$a^{(i+1)}(t)=a^{(i)}(t)+C_1(-(\bar\sigma_1^{(i)})_-+(\bar\sigma_3^{(i)})_-+\frac{J\hat a}{\hat \theta}\bar\sigma_2^{(i)}),$
where there are at most three waves $\bar\sigma_1^{(i)},\bar\sigma_2^{(i)}, \bar\sigma_3^{(i)}$ at the point when $a$ changes from $a^{(i)}$ to
$a^{(i+1)}$.
Here $g_-=-\min(g,0)$ for any function $g$, and $\hat a$ and $\hat \theta$ denote the $a$ and $\theta$ values to the left of the contact, in particular, between outgoing 1- and 2-waves right after an interaction.
}

By \cite{CKV2}, we know the weight $a(x,t)$ defined in \eqref{a_def} satisfies \eqref{con1}, \eqref{con2}, \eqref{con3} {at shocks and contact discontinuities}, respectively, 
and the time decay 
\begin{equation}\label{eqa2}
a(x,t+)\leq a(x,t-),
\end{equation}
except at any point of interaction.




\section{$L^1$ distance between solutions with real and shifted shock speeds and error estimate}
In this section, we will give some key estimate for the weighted $L^1$ distance, proposed by \cite{BLY},
between a front tracking $\nu$-approximate solution $u(x,t)$ with true Rankine--Hugoniot speeds (up to the $\mathcal{O}(\nu)$ error) and a \emph{shifted} $\nu$-approximate solution $\psi(x,t)$ whose shocks are moved by prescribed shifts (following \Cref{shift_existence_prop}). Both $u$ and $\psi$ take initial values in $\mathcal{S}_{\textrm{BV},\epsilon}^0$ defined in \eqref{small_BV_data}.



We adopt the notation in \cite{Bbook} to define the weighted distance for hyperbolic conservation laws \eqref{cl0} in the general $n\times n$ case, which includes the Euler equation as a special example ($n=3$). 

First, by $\sigma\mapsto S_i(\sigma)(u_0)\equiv S_i(u_0)$, we denote the parametrized $i$-shock curve through the point $u_0$, with $S_i(0)(u_0)=u_0$. If the i-th characteristic field is linearly degenerate (e.g. 2-nd family of Euler equation), the curve $S_i$ will be parametrized by arc-length. If the i-th field is genuinely non-linear (1-st and 3-rd families for Euler equations), the parametrization is chosen as follows. We choose the right eigenvector $r_i(u)$ corresponding to the eigenvalue $\lambda_i(u)$ of $Df(u)$, so that $r_i\cdot \nabla \lambda_i\equiv 1$. Moreover, we choose the parametrization of the $i$-shock curve $\sigma\mapsto S_i(\sigma)(u_0)$ and $i$-rarefaction curve $\sigma\mapsto R_i(\sigma)(u_0)$ so that
\[
\frac{d}{d\sigma}\lambda_i(S_i(\sigma)(u_0))\equiv1,\qquad
\frac{d}{d\sigma}\lambda_i(R_i(\sigma)(u_0))\equiv1,
\qquad
\lambda_i(S_i(\sigma)(u_0))=\lambda_i(R_i(\sigma)(u_0))=\lambda_i(u_0)+\sigma.
\]
Same as before, $S_i(\sigma)(u_0)$ represents the entropy shock curve when $\sigma<0$.

For any two small BV functions $v$ and $u$, we define the scalar functions $q_i$ implicitly by:
\[
v(x) = S_n(q_n(x)) \circ \cdots \circ S_1(q_1(x))(u(x)).
\]
Then
\beq\label{l1}
\frac{1}{K} |v(x) - u(x)| \leq \sum_{i=1}^n |q_i(x)| \leq K |v(x) - u(x)|
\eeq
for some $K>0$.

Then we define the weighted distance as:
\[
\Phi(u, v) := \sum_{i=1}^n \int_{-\infty}^{\infty} |q_i(x)| W_i(x) \, dx
\]
where the weights $W_i$ are defined as:
\[
W_i(x) := 1 + \kappa_1 A_i(x) + \kappa_2 \left[ Q(u) + Q(v) \right].
\]
Here $Q$ is the interaction potential defined in \eqref{Ldef}, and we
denote $\mathcal{J}(u)$, $\mathcal{J}(v)$ as the sets of all jumps in $u$ and in $v$ respectively and $\mathcal{J}=\mathcal{J}(u)\cup \mathcal{J}(v)$. For any wave with strength $|\sigma_\alpha|$ located at $x_\alpha$, we use $k_\alpha$ to denote its family. Then 
\[
A_i(x) := \left[ \sum_{\alpha \in\mathcal{J}, x_\alpha < x, i < k_\alpha\leq n} + \sum_{\alpha \in\mathcal{J}, x_\alpha > x, 1\leq k_\alpha<i} \right] |\sigma_\alpha|
\]
if the $i$-th field is linearly degenerate. For the general genuinely nonlinear field, $A_i(x)$ includes an additional term
\begin{eqnarray}
\left[ \sum_{k_\alpha=i, \alpha \in\mathcal{J}(u), x_\alpha < x} + \sum_{k_\alpha=i, \alpha \in\mathcal{J}(v), x_\alpha > x}\right] |\sigma_\alpha| &\hbox{if}\quad q_i(x)<0\nonumber\\
\left[ \sum_{k_\alpha=i, \alpha \in\mathcal{J}(v), x_\alpha < x} + \sum_{k_\alpha=i, \alpha \in\mathcal{J}(u), x_\alpha > x} \right] |\sigma_\alpha|&\hbox{if}\quad q_i(x)\geq0.\nonumber
\end{eqnarray}
When $\epsilon$ is small enough, $1\leq W_i\leq 2$, so by \eqref{l1}, we know
\beq \label{eq:L1-equiv}
\frac{1}{K}\cdot\|v-u\|_{L^1}\leq\Phi(u,v)\leq 2K\cdot\|v-u\|_{L^1}.
\eeq

We now give an important estimate for our theory.

\begin{lemma}[$L^1$ estimate for shifted front tracking]\label{lem:shifted-ft}
Let $u$ be a $\nu$-approximate front tracking solution without shifts and and let $\psi$ be a modified $\nu$-approximate front tracking solution whose shocks follow shift functions $h_\alpha(t)$ according to \Cref{shift_existence_prop} (see \Cref{sec:weight-con}, above). Then, in the sense of distributions,
\begin{equation}\label{eq:phi-ineq}
\frac{d}{dt}\, 
\Phi\big(u(\cdot,t),\psi(\cdot,t)\big)
\;\le\;
K \sum_{\alpha} |\psi(h_\alpha(t)+,t)-\psi(h_\alpha(t)-,t)|\,\big|\dot h_\alpha(t)- \dot{h}_{\mathrm{true},\alpha}(t)\big| +\mathcal{O}(\nu).
\end{equation}
Here the sum is over the shocks of $\psi$ at time $t$, $K$ is a universal constant, $\dot h_\alpha$ is the imposed (shifted) shock speed in $\psi$, and $\dot h_{\mathrm{true},\alpha}$ is the Rankine--Hugoniot speed the same discontinuity would have in the un-shifted scheme. 
\end{lemma}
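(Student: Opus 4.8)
The plan is to differentiate $\Phi$ exactly as in the Bressan--Liu--Yang theory \cite{BLY,Bbook,CKV2}, and then to localize the effect of the shifts to the shock fronts of $\psi$. Between interaction times both $u$ and $\psi$ are piecewise constant, so the components $q_i(\cdot,t)$ of the decomposition $\psi=S_n(q_n)\circ\cdots\circ S_1(q_1)(u)$ and the weights $W_i(\cdot,t)$ are piecewise constant in $x$, jumping only across the fronts of $u$ and of $\psi$. On each region $R$ between consecutive fronts the values $q_i^R, W_i^R$ are also constant in $t$ (the adjacent states, the interaction potentials, and the ordering of the fronts do not change between interactions), so only the region lengths evolve. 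A direct computation then gives the exact identity
\[
\frac{d}{dt}\Phi(u,\psi)=\sum_{\gamma}\dot x_\gamma\,M_\gamma,
\qquad
M_\gamma:=\sum_{i=1}^{n}\big(|q_i(x_\gamma-)|\,W_i(x_\gamma-)-|q_i(x_\gamma+)|\,W_i(x_\gamma+)\big),
\]
where the sum runs over all fronts $\gamma$ (of $u$ and of $\psi$), $\dot x_\gamma$ is the transport speed of $\gamma$, and $M_\gamma$ is the jump across $\gamma$ of the weighted density $\sum_i|q_i|W_i$. In particular the contribution of each front is \emph{exactly linear} in its speed.

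Next I would use that this identity is configuration-local: $q_i$, $W_i$, and the characteristic/Rankine--Hugoniot speeds are all determined by the states at time $t$, while the front positions enter only through the combinatorics of the weights $A_i$. Hence, evaluating the identity at the \emph{true} speeds --- each front, including every shock of $\psi$, assigned its genuine front tracking speed up to the $\mathcal{O}(\nu)$ error --- produces precisely the quantity the standard $L^1$-stability theory estimates. By \cite{BLY} (see also \cite[Ch.~8]{Bbook} and \cite{CKV2}) the weights $W_i$ are designed so that genuine nonlinearity together with the decrease of the interaction potentials $Q$ absorbs all these terms, giving $\sum_\gamma \dot x_\gamma^{\mathrm{true}} M_\gamma\le\mathcal{O}(\nu)$; at the (binary) interaction times the drop of $Q$ produces only favorable downward jumps of $\Phi$, so in the sense of distributions the reference contribution is $\le\mathcal{O}(\nu)$.

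I would then switch on the shifts. The only fronts whose speeds differ from the reference are the shocks $\alpha$ of $\psi$, where $\dot x_\alpha=\dot h_\alpha$ replaces $\dot h_{\mathrm{true},\alpha}$; by the linearity in speed,
\[
\frac{d}{dt}\Phi(u,\psi)\le\mathcal{O}(\nu)+\sum_{\alpha:\ \mathrm{shock\ of\ }\psi}\big(\dot h_\alpha-\dot h_{\mathrm{true},\alpha}\big)M_\alpha .
\]
It remains to show $|M_\alpha|\le K\,|\psi(h_\alpha+,t)-\psi(h_\alpha-,t)|$. Writing $M_\alpha=\sum_i(|q_i^-|-|q_i^+|)W_i^-+\sum_i|q_i^+|(W_i^--W_i^+)$ and using $1\le W_i\le 2$, the first sum is bounded by $2\sum_i|q_i^--q_i^+|$. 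Since $x_\alpha$ carries a single $k_\alpha$-shock of $\psi$ (with $u$ continuous there, away from the measure-zero set of coincident fronts), the standard Glimm interaction estimates give $\sum_i|q_i^+-q_i^-|\le K|\sigma_\alpha|$ --- the family $k_\alpha$ at leading order, the transversal families $i\ne k_\alpha$ at higher order --- so by \eqref{l1} the first sum is $\le K|\psi(h_\alpha+)-\psi(h_\alpha-)|$. For the second sum, $W_i$ jumps across a single front of strength $|\sigma_\alpha|$ by at most $\kappa_1|\sigma_\alpha|$ (only the $A_i$ term feels the front, as $Q$ is $x$-independent), while $\sum_i|q_i^+|$ is the pointwise distance $|\psi-u|$, which is bounded; hence this term is also $\le K|\sigma_\alpha|$. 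As the shock strength is comparable to $|\psi(h_\alpha+)-\psi(h_\alpha-)|$, combining yields \eqref{eq:phi-ineq}.

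\textbf{Main obstacle.} The delicate step is the reference estimate $\sum_\gamma\dot x_\gamma^{\mathrm{true}}M_\gamma\le\mathcal{O}(\nu)$: this bound is not sign-definite front-by-front, and emerges only after the carefully tuned weights $W_i$ trade genuinely nonlinear compression and transversal interactions against the decrease of $Q$, in the $n\times n$ Bressan--Liu--Yang bookkeeping. One must verify that this whole mechanism may be run at the true shock speeds and that the shifts influence $\frac{d}{dt}\Phi$ \emph{only} through the explicit, localized linear correction $\sum_\alpha(\dot h_\alpha-\dot h_{\mathrm{true},\alpha})M_\alpha$, without disturbing the interaction/weight-decrease accounting. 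The other point requiring care is confirming that the transversal components $q_i$ ($i\ne k_\alpha$) jump by only $\mathcal{O}(|\sigma_\alpha|)$ across a shifted shock, which rests on the small-BV interaction estimates underlying \eqref{l1}.
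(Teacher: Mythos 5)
Your proposal follows essentially the same route as the paper: between interaction times you use the exact linearity of $\frac{d}{dt}\Phi$ in the front speeds (the paper realizes this by introducing an auxiliary un-shifted solution $\bar\psi$ started from the profile of $\psi(\cdot,t^*)$ and subtracting the two derivative formulas, which is the same computation), you invoke the Bressan--Liu--Yang estimate to bound the reference contribution by $\mathcal{O}(\nu)$, you reduce the shift contribution to $\sum_\alpha(\dot h_\alpha-\dot h_{\mathrm{true},\alpha})M_\alpha$ with $|M_\alpha|\le K|\sigma_\alpha|$, and you dispose of interaction times via the decrease of $\Phi$. The structure and conclusion are correct.

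One step, however, is stated with an incorrect justification and would fail as written. In bounding the second sum $\sum_i|q_i^+|\,(W_i^--W_i^+)$ you claim that $W_i$ jumps across a front of strength $|\sigma_\alpha|$ by at most $\kappa_1|\sigma_\alpha|$. This is true only when $q_i$ does not change sign across $x_\alpha$. For a genuinely nonlinear family, the additional term in $A_i(x)$ switches between two different sums according to the sign of $q_i(x)$; if $q_i$ changes sign across the front, $A_i$ (hence $W_i$) can jump by an amount comparable to the \emph{total} strength of all $i$-waves in $u$ and $\psi$, i.e.\ $\mathcal{O}(\epsilon)$ rather than $\mathcal{O}(|\sigma_\alpha|)$. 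The estimate is rescued not by the smallness of the jump of $W_i$ but by the companion fact \eqref{q_con_1} that $q_i(x_\alpha\pm)=\mathcal{O}(1)|\sigma_\alpha|$ whenever $q_i(x_\alpha+)q_i(x_\alpha-)\le 0$, so that $|q_i^+|\,|W_i^--W_i^+|=\mathcal{O}(|\sigma_\alpha|)$ using only $1\le W_i\le 2$. This is exactly the case split the paper performs (following \cite[p.~162]{Bbook}); with that repair your argument coincides with the one given in the text.
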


\begin{proof}



Now we consider the distance $\Phi(u, \psi)$, with $\nu$-approximate solution $u$
 and shifted $\nu$-approximate solution $\psi$.
 
 Since there are in total only finitely many waves in $u$ and $\psi$, there are in total only finitely many interaction times when two waves in $u$ interact or two waves in $\psi$ interact.
 If one wave in $u$ attaches to a wave in $\psi$ for a while, we can perturb the speed of the wave in $u$ a bit to avoid it, since we allow $\nu$ error in the speeds of waves in $u$. 

\smallskip
 \paragraph{\bf Case 1.}
Now we first consider one of finitely many open time intervals $(t_j,t_{j+1})$, in which there are no interactions between any two waves in  $u$ or in $\psi$.

For any $t\in(t_j,t_{j+1})$,  all jumps in  $u$ and $\psi$ are denoted as $\{x_\alpha=x_\alpha(t),$ $\alpha\in\mathcal{J}=\mathcal{J}(u)\cup \mathcal{J}(\psi)\}$
where we use $J(u)$ and $J(\psi)$ to denote all jumps in $u$ and $\psi$, respectively.

Then it is easy to get 
\beq\label{Phi1}
\frac{d}{dt}\Phi(u(\cdot,t),\psi(\cdot,t))=\sum_{\alpha\in\mathcal{J}}\sum_{i=1}^n\{|q_i(x_\alpha-)|W_i(x_\alpha-)-|q_i(x_\alpha+)|W_i(x_\alpha+) \}\dot{x}_\alpha
\eeq
(see \cite{BLY} or \cite{Bbook} equation (8.16)).

On the other hand, starting from the profile of $\psi(\cdot,t^*)$ for any fixed $t^*\in (t_j,t_{j+1})$, we consider the corresponding $\nu$-approximate solution $\bar\psi$ with initial data $\psi(\cdot,t^*)$, in a short open time interval including $t^*$. The function $\bar\psi$ will have  wave speeds $\dot {\bar x}_\alpha$, including almost correct Rankine-Hugoniot for shocks,  allowing up to $\nu$ error. We remark that $\dot {\bar x}_\alpha$ may only be different from  $\dot { x}_\alpha$ for shock waves.  We still have 
\begin{eqnarray}
\frac{d}{dt}\Phi(u(\cdot,t),\bar\psi(\cdot,t))&=&
\sum_{\alpha\in\mathcal{J}(u)}\sum_{i=1}^n\{|q_i( x_\alpha-)|W_i(x_\alpha-)-|q_i(x_\alpha+)|W_i( x_\alpha+) \}\dot{x}_\alpha \label{Phi2}\\
&&
+\sum_{\alpha\in\mathcal{J}(\bar\psi)}\sum_{i=1}^n\{|q_i(\bar x_\alpha-)|W_i(\bar x_\alpha-)-|q_i(\bar x_\alpha+)|W_i(\bar x_\alpha+) \}\dot{\bar x}_\alpha,\nonumber
\end{eqnarray}
where we use $J(\bar\psi)$ to denote all jumps in $\bar\psi$.

By the result in \cite[equation (3.7)]{BLY}  or \cite[equation (8.22)]{Bbook},
\[
\frac{d}{dt}\Phi(u(\cdot,t), \bar\psi(\cdot,t))=\mathcal{O}(\nu).
\]

Note, at $t=t^*$, $\bar x_\alpha=x_\alpha$, and $\bar \psi$ and $\psi$ share the same profile, so  by \eqref{Phi1} and \eqref{Phi2},
\begin{eqnarray*}
&&\frac{d}{dt}\Phi(u(\cdot,t),\psi(\cdot,t))|_{t=t^*}
-\frac{d}{dt}\Phi(u(\cdot,t),\bar\psi(\cdot,t))|_{t=t^*}\\
&=&\Bigg(\sum_{\alpha\in\mathcal{J}(\psi)}\sum_{i=1}^n\{|q_i(x_\alpha-)|W_i(x_\alpha-)-|q_i(x_\alpha+)|W_i(x_\alpha+) \}(\dot{x}_\alpha-\dot{\bar x}_\alpha)\Bigg)\Big|_{t=t^*} .
\end{eqnarray*}
{Now we prove the right hand side of this equation is less than
\beq\label{error_est}
K\sum_{\alpha} |\psi(x_\alpha+)-\psi(x_\alpha-)||\dot{x}_\alpha-\dot{\bar x}_\alpha|\Big|_{t=t^*}+\mathcal{O}(\nu),
\eeq
with the sum is over shocks of $\psi$ at time $t=t^*$.}


To prove \eqref{error_est},
we use that fact that, for the genuinely nonlinear or linearly degenerate family
\[
W_i(x_\alpha+)-W_i(x_\alpha-)=\mathcal{O}(1)\sigma_\alpha\quad \hbox{if}\quad q_i(x_\alpha+)q_i(x_\alpha-)>0;
\]
\beq\label{q_con_1}
q_i(x_\alpha-)=\mathcal{O}(1)|\sigma_\alpha|\quad \hbox{if}\quad q_i(x_\alpha+)q_i(x_\alpha-)\leq 0;
\eeq
and
\beq\label{q_con_2}
q_i(x_\alpha+)-q_i(x_\alpha-)=\mathcal{O}(1)|\sigma_\alpha|
\eeq
(see page 162 of \cite{Bbook}).
Then by \eqref{q_con_1} and \eqref{q_con_2}, we know
\[
q_i(x_\alpha+)=\mathcal{O}(1)|\sigma_\alpha|\quad \hbox{if}\quad q_i(x_\alpha+)q_i(x_\alpha-)\leq 0.
\]
 Then 
\begin{align*}
&|q_i(x_\alpha-)|W_i(x_\alpha-)-|q_i(x_\alpha+)|W_i(x_\alpha+) \\
=&-W_i(x_\alpha+) (|q_i(x_\alpha+)|-|q_i(x_\alpha-)|)
-(W_i(x_\alpha+)-W_i(x_\alpha-))|q_i(x_\alpha-)|\\
=&\,\mathcal{O}(1) |\sigma_\alpha|.
\end{align*}
{Hence \eqref{error_est} holds, using the equivalence between $|\sigma_\alpha|$ and $|\psi(x_\alpha+)-\psi(x_\alpha-)|$ and $\dot{x}_\alpha=\dot{\bar x}_\alpha$ for corresponding rarefactions and non-physical shocks in $\psi$ and $\bar\psi$.}

On the other hand, we already know from \cite{Bbook,BLY} that 
\[
\frac{d}{dt}\Phi(u(\cdot,t),\bar\psi(\cdot,t))|_{t=t^*}=\mathcal{O}(\nu).
\]
{Since $t^*$ is arbitrary, we have the estimate
\beq\label{key est}
\frac{d}{dt}\Phi(u(\cdot,t),\psi(\cdot,t))\leq K\sum_{\alpha} |\psi(x_\alpha+,t)-\psi(x_\alpha-,t)||\dot{x}_\alpha-\dot{\bar x}_\alpha|+\mathcal{O}(\nu),
\eeq 
with the sum over shocks of $\psi$ at time $t\in(t_j,t_{j+1})$.}

\bigskip

 \paragraph{\bf Case 2.}
When $t=t_j$ with an interaction, it is easy to see 
\[
\Phi(u(\cdot,t+),\psi(\cdot,t+))\leq \Phi(u(\cdot,t-),\psi(\cdot,t-)).
\]
In fact, we still find the  $\nu$-approximate solution $\bar \psi$ in a short open time interval including $t_j$,  starting from the profile of $ \psi(\cdot,t_j)$.
Then it is clear that 
\[\Phi(u(\cdot,t+),\psi(\cdot,t+))=\Phi(u(\cdot,t+),\bar \psi(\cdot,t+))\leq \Phi(u(\cdot,t-),\bar \psi(\cdot,t-))=\Phi(u(\cdot,t-),\psi(\cdot,t-)).\]
Here using the Lax entropy condition, when we trace back the solution in time at any interaction between two waves in $\psi$, we know incoming waves of $\psi$ and $\bar\psi$ share the same configuration (see \cite{2025arXiv250723645C}).
\end{proof}

\section{Proof of \Cref{main-theorem}}
Here we present the proof of our Main Theorem. For brevity and clarity, we leave out some very technical details and refer the reader to \cite{CKV2}, \cite[Section 6]{2025arXiv250723645C}, and \cite[Section 6]{GiesselmannKrupa2025}. 

\subsection{Step 1: Dissipation computation}\label{sec:diss_calc}
Given a fixed $v^0 \in \mathcal{S}_{\textrm{BV},\epsilon}^0$ for $\epsilon > 0$ sufficiently small there exists a sequence of front tracking approximations $v_\nu$ from \cite[Chapter 7]{Bbook} such that $v_\nu(\cdot,t)\to v(\cdot,t)$ in $L^2$ for all $t$, including $v_\nu(\cdot, 0) \to v^0$ in $L^2$ as $\nu\to0$. 
Fixing our parameter $\nu$, let $\psi$ be a shifted front tracking solution as described in \cite{CKV2} such that $\psi(\cdot, 0) = v_\nu(\cdot, 0)$.
Associated to this shifted approximation $\psi$, we also fix a weight $a(x,t)$ as defined in \Cref{sec:weight-con}.

Given consecutive interaction times of waves $t_j < t_{j+1}$ for our shifted solution $\psi$ we define $h_1,\dots,h_N\colon [t_j, t_{j+1}] \to \R$ to be the positions of the discontinuities of $\psi$, where $h_i < h_{i+1}$ for all $i$. 
Furthermore, since we are restricting ourselves to the information cone we are able to define 
\beq h_0(t) \coloneqq -R+ s(t-\tau),\ h_{N+1}(t) \coloneqq R - s(t-\tau) \eeq
where $s > 0$ is our speed of information, taken sufficiently large to verify
\beq \label{eq:def-info-speed}
    |q(a;b)| \leq s \eta(a| b)
\eeq
for all states $a \in \mathcal{V}$ and states $b$ attained by our shifted approximation $\psi$. 
We further suppose that $s$ is larger than $\hat \lambda$, which results in $h_0,\ h_{N+1}$ moving faster than any artificial shift constructed by \Cref{shift_existence_prop}, rarefaction front, or non-physical front. 

For $t \in [t_j,t_{j+1}]$, in any fixed quadrilateral
\beq Q = \{ (x,r)\, | \, t_j < r < t,\ h_i(r) < x < h_{i+1}(r) \} \eeq
the functions $\psi|_Q$ and $a|_Q$ are both constant. 
Integrating~\eqref{ineq:entropy} over $Q$ with entropy $\eta\left(\,\cdot\,\middle|\psi|_Q\right) $, entropy-flux $q\left(\,\cdot\,;\psi|_Q\right)$, and using that $u$ has the Strong Trace Property (\Cref{defi_trace}) we find\footnote{We remark that to simplify presentation of this proof we are ignoring a technical issue of stopping and restarting the clock, which requires use of approximate limits in this computation. See \cite[Section 7]{MR4487515} for discussion of this technicality.}
\beq \label{eq:single-quad}
\begin{aligned}
    \int_{h_i(t)}^{h_{i+1}(t)} a(x, t-) \eta(u(x,t)| \psi(x,t))\,dx \leq& \int_{h_i(t_j)}^{h_{i+1}(t_j)} a(x, t_j+) \eta(u(x,t_j)| \psi(x,t_j))\,dx \\
    & \hspace{1in}+ \int_{t_j}^t F_i^+(r) - F_{i+1}^-(r)\,dr, 
\end{aligned}\eeq
where the various $F^+_i,\ F^-_i$ are the right/left parts of the dissipation functional ~\eqref{eq:diss_functional} for the fronts bounding the left/right edges of $Q$. 
Summing all quadrilaterals left-to-right between $t_j, t$ we can now pair the traces at each front (matching the left- and right-sided dissipation, as in \eqref{eq:diss_functional}), and by virtue of the bounds in \Cref{sec:rel-entropy} we find 
\beq \label{eq:sum-between-interactions-SUB}
\begin{aligned}
    \int_{-R - s(t-\tau)}^{R + s(t-\tau)} a(x, t-) \eta(u(x,t)| \psi(x,t))\,dx
    \leq& \int_{h_0(t_j)}^{h_{N+1}(t_j)} a(x, t_j+) \eta(u(x,t_j)| \psi(x,t_j))\,dx \\
    &- \frac{1}{K}\int_{t_j}^t \sum_{i\in \mathcal{S}(r)} | \psi(h_i(r) +,r) - \psi(h_i(r) -, r)|( \lambda^s_i - \dot h_i(r) )^2 \,dr \\
    &\hspace{1.7in}+ K(t-t_j)\left[ \nu + \sum_{i \in \mathcal{NP}(t)} |\sigma_i|\right].
\end{aligned}
\eeq
where $\mathcal{S}(t),\ \mathcal{NP}(t) \subset \{ 1,\dots, N\}$ correspond to the fronts $h_i$ of shocks and non-physical fronts of $\psi$ at time $t$, respectively, and $ \lambda^s_i = \lambda_{\alpha_i}(\psi(h_i(r)+,r),\psi(h_i(r)-,r)) $ is the Rankine-Hugoniot velocity of the $i$-{th} shock, which is of family $\alpha_i \in \{1,3\}$. 
We remark that $\mathcal{S}(t),\ \mathcal{NP}(t)$ are constant between times of interaction and that the errors caused by perturbing wave velocities (to prevent more than two waves from interacting at once) and discretized rarefactions are bounded by $K \nu(t-t_j)$ in the above expression.

Fixing $\tau \in [0,T]$ we now sum~\eqref{eq:sum-between-interactions-SUB} over each time interval $[t_j,t_{j+1}]$ where $t_1 <\cdots < t_{J-1}$ are consecutive times of front interaction prior to $\tau$ and $t_{J-1}\leq t_J\coloneqq \tau$.
Using the time decay of our weight $a$~\eqref{eqa2} and the convexity of $\eta$, we receive the bound
\beq \label{eq:diss-comp}\begin{aligned}
     \int_{-R}^{R} a(x, \tau) \eta(u(x,\tau)| \psi(x,\tau))\,dx \leq&  \int_{R + s\tau}^{R - s\tau} a(x, 0) \eta(u(x,0)| \psi(x,0))\,dx \\
     &- \frac{1}{K} \int_0^\tau \sum_{i \in \mathcal{S}(r)} | \psi(h_i(r) +,r) - \psi(h_i(r) -, r)| ( \lambda^s_i - \dot h_i(r) )^2 \,dr\\
     &\hspace{1.9in}+ K \left[ \nu + \sup_{t \in [0,\tau]}\sum_{i \in \mathcal{NP}(t)} |\sigma_i | \right],
\end{aligned}\eeq
where the $\nu$ term contains all errors due to the discretization of rarefaction fans and perturbation of wave speeds to avoid multiple simultaneous interactions in $\psi$. 

\subsection{Step 2: Bound on shifting} 

Recalling that $v_\nu(\cdot,0) = \psi(\cdot,0)$ and using \Cref{lem:shifted-ft}, we have 
\begin{align*}
    \Phi(v_\nu(\cdot, \tau), \psi(\cdot, \tau)) \leq& K\int_0^\tau \sum_{i \in \mathcal{S}(r)} | \psi(h_i(r) -,r) - \psi(h_i(r) +, r) |  \left| \lambda^s_i -\dot h_i(r) \right| + \mathcal{O}(\nu), 
\end{align*}
where $\dot h_i$ is the shifted velocity of the $i$-th shock front in $\psi$. 
The Cauchy-Schwartz inequality gives us 
\beq\label{eq:L1-CS}\begin{aligned}
    \Phi(v_\nu(\cdot, \tau), \psi(\cdot, \tau)) \leq& K\left[ \int_0^\tau \sum_{\mathclap{i \in \mathcal{S}(r)}}| \psi(h_i(r) -,r) - \psi(h_i(r) +, r) |\,dr \right]^{1/2} \\
    &\times \left[ \int_0^\tau \sum_{\mathclap{i \in \mathcal{S}(r)}} | \psi(h_i(r) -,r) - \psi(h_i(r) +, r) | \left|  \lambda^s_i -\dot h_i(r) \right|^2 \,dr \right]^{1/2} \hspace{-1em}+ \mathcal{O}(\nu) \\ 
    \leq& K\sqrt{\epsilon} \left[ \int_{-R - s\tau}^{R + s\tau} a(x,0)\eta(u(x,0) | \psi(x,0)) \,dx  + K \left[ \nu + \sup_{t \in [0,\tau]}\sum_{i \in \mathcal{NP}(t)} |\sigma_i|\right] \right]^{1/2} \hspace{-1em} + \mathcal{O}(\nu)
\end{aligned}\eeq
where the last bound follows by \eqref{eq:diss-comp} and the uniform bound on $\norm{\psi(\cdot,t)}_{BV(\R)}$ from \Cref{prop:delta}.

\subsection{Step 3: Putting it together}
The triangle inequality gives us 
\beq
    \norm{u(\cdot, \tau) - v_\nu(\cdot, \tau)}_{L^1( (-R,R)) } \leq \norm{u(\cdot, \tau) - \psi(\cdot, \tau)}_{L^1( (-R,R)) } + \norm{\psi(\cdot, \tau) - v_\nu(\cdot, \tau)}_{L^1( (-R,R)) }.
\eeq
Using the inclusion $L^2 \hookrightarrow L^1$ for finite measure spaces we find 
\[
    \norm{u(\cdot, \tau) - v_\nu(\cdot, \tau)}_{L^1( (-R,R)) } \leq \sqrt{2R}\norm{u(\cdot, \tau) - \psi(\cdot, \tau)}_{L^2( (-R,R)) } + \norm{\psi(\cdot, \tau) - v_\nu(\cdot, \tau)}_{L^1( (-R,R)) }.
\]
We further note that by $v_\nu(\cdot,0)=\psi(\cdot,0)$, the equivalence~\eqref{eq:L1-equiv}, dissipation estimate~\eqref{eq:diss-comp}, \Cref{lemma:rel-square}, and the bound~\eqref{eq:L1-CS} that
\begin{multline*}
\norm{u(\cdot, \tau) - v_\nu(\cdot, \tau)}_{L^1( (-R,R)) } \leq K \left[ \norm{u(\cdot, 0) - v_\nu(\cdot, 0)}_{L^2( (-R -s\tau, R + s\tau))}^2 + K \left[ \nu + \sup_{t \in [0,\tau]}\sum_{i \in \mathcal{NP}(t)} |\sigma_i|\right] \right]^{1/2} 
\\ + \mathcal{O}({\nu}).
\end{multline*}
Finally, recalling that $\norm{f}_{L^2} \leq \sqrt{ \norm{f}_L^1 \norm{f}_{L^\infty}}$ and taking the limit $\nu \to 0$ we arrive at 
\beq 
\norm{u(\cdot, \tau) - v(\cdot, \tau)}_{L^2( (-R,R)) } \leq K \sqrt{\norm{u(\cdot, 0) - v(\cdot, 0)}_{L^2( (-R -s\tau, R + s\tau)} },
\eeq
where we note the term summing the strength of non-physical fronts converges to zero (see \cite[Lemma 3.1]{baiti1998front}).


\bibliographystyle{apalike}
\bibliography{references-2}

\begin{center} 
\includegraphics[width=.4\linewidth]{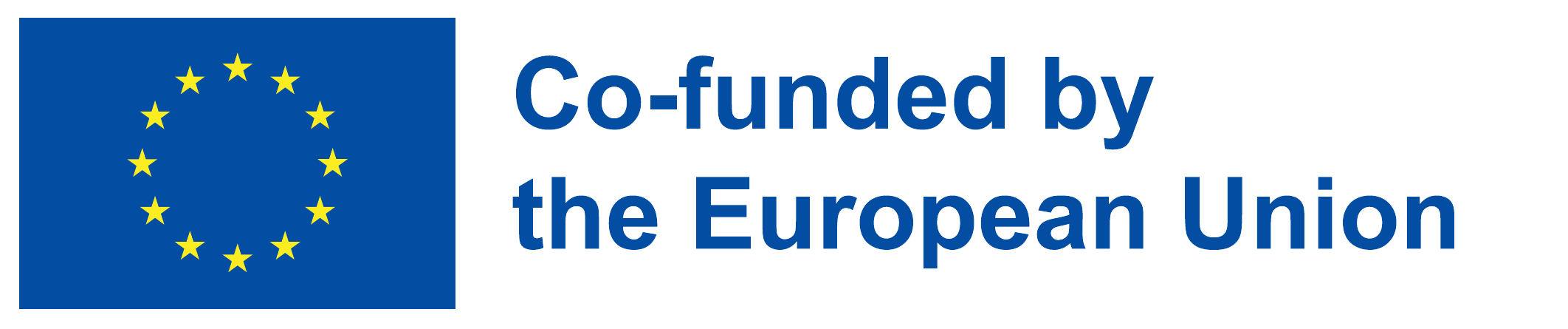}
\end{center}

\end{document}